\newtheorem{thm}{Theorem}[section]
\newtheorem{lem}[thm]{Lemma}
\newtheorem{rem}[thm]{Remark}
\renewcommand{\div}{\text{div}}
\numberwithin{equation}{section}
\numberwithin{thm}{section}
\begin{document}
\begin{center}\large \bf 
On self-similar solutions to degenerate compressible Navier-Stokes equations
\end{center}

\vskip5mm

\footnote[0]
{
{\it Mathematics Subject Classification} (2010): 35C06, 76N10

{\it 
Keywords}: Compressible Navier-Stokes equations; self-similar solutions.

{\it Acknowledgements}: 
P. Germain was partially supported by the NSF grant DMS-1501019. 
T. Iwabuchi was supported by JSPS KAKENHI Grant 17H04824. 

{\it Addresses}: 
Pierre Germain, Courant Institute of Mathematical Sciences, New York University, 251 Mercer Street, New York, N.Y. 10012-1185, USA, {\tt pgermain@cims.nyu.edu}

Tsukasa Iwabuchi, Mathematical Institute, Tohoku University, 
Sendai City, 980-8578, JAPAN, 
{\tt t-iwabuchi@tohoku.ac.jp}

Tristan L\'eger, Courant Institute of Mathematical Sciences, New York University, 251 Mercer Street, New York, N.Y. 10012-1185, USA, {\tt tleger@cims.nyu.edu}
}


\begin{center}

Pierre GERMAIN, \quad Tsukasa IWABUCHI, \quad Tristan L\'EGER 

\vskip2mm


\end{center}

\vskip5mm

\begin{center}
\begin{minipage}{135mm}
\footnotesize
{\sc Abstract. } 
We study cavitating self-similar solutions to compressible Navier-Stokes equations with degenerate density-dependent viscosity. We prove both existence of expanders and non-existence of small shrinkers. 

\end{minipage}
\end{center}

\tableofcontents

\section{Introduction}
\subsection{The equations}
In this article we consider the compressible Navier-Stokes-Fourier system, which describes the motion of a heat conducting gas:
\begin{equation*}
\begin{cases}
\partial_t \rho + {\rm div \, } \big( \rho u \big) 
= 0 , 
\\
\displaystyle 
\partial _t (\rho u) + 
{\rm div \,} \big( \rho u \otimes u \big) + \nabla \pi 
 = \div \big( \tau \big),
\\
\displaystyle 
\partial_t 
\Big[ \rho \Big( \frac{|u|^2}{2} + e \Big) \Big] 
+ {\rm div} 
\Big[ u \Big( \rho \Big(\frac{|u|^2}{2} +e \Big) 
      + \pi \Big) \Big]
- {\rm div \,} q
\\
\qquad \qquad \qquad \qquad \qquad \qquad \qquad = {\rm div \,} (\tau \cdot u ) ,
\end{cases}
\end{equation*}
where $\rho:\mathbb{R} \times \mathbb{R}^d \rightarrow \mathbb{R}$ denotes the density of the fluid, $u:\mathbb{R} \times \mathbb{R}^d \rightarrow \mathbb{R}^d$ its velocity field, $\pi:\mathbb{R} \times \mathbb{R}^d \rightarrow \mathbb{R}$ its pressure, $\tau:\mathbb{R} \times \mathbb{R}^d \rightarrow \mathbb{R}^{d^2}$ its stress tensor, $e:\mathbb{R} \times \mathbb{R}^d \rightarrow \mathbb{R}$ its internal energy and $q:\mathbb{R} \times \mathbb{R}^d \rightarrow \mathbb{R}^d$ its internal energy flux.
\\
\indent We will assume that $e$ is given by Joule's law, that is $e=C_V \theta$ ($C_V$ is the heat constant, and $\theta(t,x)$ the temperature of the gas). We also take $q$ to be proportional to the gradient of $\theta,$ in accordance with Fourier's law: $q = - \kappa \nabla \theta,$ where $\kappa>0$ denotes the thermal conductivity. The pressure will be given by the ideal gas law: $\pi = \rho R \theta.$ We will also restrict our attention to newtonian gases, for which $\tau$ is given by
\begin{align*}
\tau = \lambda \textrm{div} u~ \textrm{Id} + 2\mu D(u), \ \ D(u):= \big( \frac{\partial_i u_j + \partial_j u_i}{2} \big)_{1 \leqslant i \leqslant d, 1 \leqslant j \leqslant d}, 
\end{align*}
where the Lam\'{e} coefficients $\lambda, \mu$ are such that $$
\mu>0, \ \ 2\mu+d\lambda \geqslant 0.
$$
Moreover it is physically relevant (from the point of view of kinetic theory for example) to consider the coefficients $\lambda$ and $\mu$ that depend on the density $\rho.$ More precisely, we postulate the following laws:
\begin{align}\label{visc-law}
\lambda(\rho) = \lambda_0 \rho^{\alpha},  \ \  \ \ \mu(\rho)= \mu_0 \rho^{\alpha}, 
\end{align} 
where $0 < \alpha \leqslant 1.$ \\
The equations read
\begin{equation} \label{eq:cNS} \tag{cNS}
\begin{cases}
\partial_t \rho + {\rm div \, } \big( \rho u \big) 
= 0 , 
\\
\displaystyle 
\partial _t (\rho u) + 
{\rm div \,} \big( \rho u \otimes u \big) + \nabla ( \rho R \theta ) 
 = \div \big( \lambda_0 \rho^{\alpha} \div u~ \textrm{Id} + \mu_0 \rho^{\alpha} (\nabla u + \nabla u^T )     \big),
\\
\displaystyle 
\partial_t 
\Big[ \rho \Big( \frac{|u|^2}{2} + C_V \theta \Big) \Big] 
+ {\rm div} 
\Big[ u \Big( \rho \Big(\frac{|u|^2}{2} + C_V \theta \Big) 
      + \rho R \theta \Big) \Big]
- \kappa \Delta \theta
\\
\qquad \qquad \qquad \qquad \qquad \qquad \qquad = {\rm div \,} (\lambda_0 \rho^{\alpha} ({\rm div \,} u) u + \mu_0 \rho^{\alpha}  (\nabla u + \nabla u^T ) \cdot u ) ,
\end{cases}
\end{equation}
Note that the system is scaling invariant: if $(\rho,u,\theta)$ denotes a solution, then $(\rho_{\lambda},u_{\lambda},\theta_{\lambda}),$ where
\begin{align*}
\rho_{\lambda}(t,x) := \rho(\lambda^2 t, \lambda x), \ \ \ u_{\lambda}(t,x) := \lambda u (\lambda^2 t , \lambda x), \ \ \ \theta_{\lambda}(t,x) := \lambda^2 \theta(\lambda ^2 t , \lambda x)
\end{align*}
is also a solution.
\\
\indent This leads us to considering self-similar solutions to \eqref{eq:cNS}. In this paper, we will therefore restrict our attention to self-similar, radially symmetric solutions. Moreover since a well-known difficulty in the case of degenerate viscosity is the presence of vacuum, we will also assume that our solutions exhibit cavitation at the origin. More precisely, we study both expanders of the form
\begin{align}\label{expanders}
\rho(t,x) = P \bigg( \frac{\vert x \vert}{\sqrt{t}} \bigg), & \ \ \  
 u(t,x) = \frac{1}{\sqrt{t}} U \bigg( \frac{\vert x \vert}{\sqrt{t}} \bigg) \frac{x}{\vert x \vert}, \\ \notag \theta(t,x) &= \frac{1}{t} \Theta \bigg(\frac{\vert x \vert}{\sqrt{t}} \bigg),
\end{align}
and shrinkers:
\begin{align}\label{shrinkers}
\rho(t,x) = P \bigg( \frac{\vert x \vert}{\sqrt{T-t}} \bigg), & \ \ \  
 u(t,x) = \frac{1}{\sqrt{T-t}} U \bigg( \frac{\vert x \vert}{\sqrt{T-t}} \bigg) \frac{x}{\vert x \vert}, \\ \
\notag \theta(t,x) &= \frac{1}{T-t} \Theta \bigg(\frac{\vert x \vert}{\sqrt{T-t}} \bigg),
\end{align}
where $T>0.$ We will require that $P(0)=0$ to signify that there is cavitation at the origin. 

\subsection{Background}
\subsubsection{Weak solutions}
The system \eqref{eq:cNS} and related models have been extensively studied in the literature. In the constant coefficient case, A. Matsumura and T. Nishida proved global existence for small data in \cite{MN-1980}. Note that in this work the density is bounded away from 0, therefore no vacuum is present. P.-L. Lions constructed weak solutions for barotropic compressible Navier-Stokes with large initial data and possibly vacuum states in \cite{Lions-1998}. His result was subsequently refined and extended to the full system by E. Feireisl, A. Novotn\'y and H. Petzeltov\'a in \cite{FNP-2001} and E. Feireisl in \cite{F-2004}. \\
\indent In the case of degenerate density-dependent viscosity (for example of the type \eqref{visc-law}), the situation becomes more involved. A work of major importance in this direction is the paper by D. Bresch, B. Desjardins and C.-K. Lin (\cite{BDL}), where this problem is treated for various compressible models, including the shallow water model away from vacuum. Recently A. Vasseur and C. Yu were able to construct weak solutions to the barotropic compressible Navier-Stokes with vacuum in \cite{VY}.

\subsubsection{Self-similar solutions}
Self-similar solutions have been the subject of investigation since the seminal work of J. Leray \cite{Le} on the incompressible Navier-Stokes equation. Indeed he noticed that the existence of a backward solution of this type would imply singularity formation. Forward self-similar solutions are also if interest since they are expected to describe the continuation of the backward solution after the singular time. Such small expander type solutions were constructed by Cannone and Planchon in \cite{CP}. Recently V. \v Sver\'{a}k and H. Jia were able to construct solutions to the incompressible Navier-Stokes equation with large self-similar initial data in \cite{JS}. Regarding backward self-similar solutions, J. Ne\v cas, M. R\r u\v zi\v cka, V. \v Sver\'{a}k proved their non existence in the natural energy class of the Navier-Stokes equation. This result was improved by Tsai who showed that in fact this result still holds if only local energy inequalities are assumed.
\\
\indent 
For compressible Navier Stokes equations, Z. Guo and S. Jiang showed in \cite{GJ-2006} that in the 1D isothermal barotropic case, there exist neither forward nor backward self-similar solutions. In the case of \eqref{eq:cNS} with constant viscosity, the first two authors constructed expanders both with and without cavitation in \cite{GI}. Finally in the companion paper \cite{GIL}, we prove non existence of small shrinkers, still in the constant coefficient case. 
 
\subsection{Results}
We prove two types of results in this paper: existence of small expanders in Section \ref{fwrd}, and non-existence of small shrinkers in Section \ref{backwrd}. The construction of forward solutions follows the same pattern as in the paper of the first two authors \cite{GI}, although the proof is more involved due to the degeneracy of the coefficients at vacuum. It requires a more precise control of the velocity profile near the origin. For technical reasons, we distinguish between the cases $0<\alpha<1$ and $\alpha = 1.$ 
In the first situation ($0<\alpha<1$), here is simplified statement of our result:
\begin{thm}
Let $d \geqslant 3.$ Assume $2\mu_0+d\lambda_0=0.$ Then if $$A + 
P_\delta +
\delta +  P_\delta ^{1-\alpha} \delta^2 + \frac{P_\delta^{1-\alpha} \Theta_0}{A} 
+ \frac{A\delta}{\Theta_0} + \frac{A^2}{\Theta_0} + P_{\delta} A $$ is small enough, there exists a solution to \eqref{eq:cNS} of the form \eqref{expanders} such that
\begin{align*}
P(\delta) = P_{\delta}, U(0)=0, U'(0)=A, \Theta(0)=\Theta_0>0, \Theta'(0)=0. 
\end{align*}
\end{thm}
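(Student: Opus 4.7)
The plan is to insert the self-similar ansatz \eqref{expanders} into \eqref{eq:cNS} and reduce the PDE system to a coupled system of ordinary differential equations in the similarity variable $\xi = |x|/\sqrt{t}$ for the three profiles $(P, U, \Theta)$. Radial symmetry together with the relation $2\mu_0 + d\lambda_0 = 0$ simplifies the viscous stress in the momentum equation; the mass equation becomes a first-order ODE for $P$, while the momentum and energy equations become second-order ODEs for $U$ and $\Theta$, for a total of five scalar initial conditions that match exactly the five data $P(\delta) = P_\delta$, $U(0) = 0$, $U'(0) = A$, $\Theta(0) = \Theta_0$, $\Theta'(0) = 0$ prescribed in the theorem.

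I would carry out the construction in two stages. In the first stage I build the solution on $[0, \delta]$ near the cavitation point. The mass equation can be integrated explicitly to express $P$ in terms of $U$, and since $U(\xi) = A\xi + O(\xi^2)$ near the origin, the resulting formula produces a density profile that vanishes at $0$ at a rate controlled by $A$, so that the cavitation condition $P(0) = 0$ is automatic. Substituting this expression back into the momentum and energy equations yields a closed system for $(U, \Theta)$, which I solve by a Banach fixed point argument in a weighted function space that enforces the prescribed Taylor behavior $U(\xi) = A\xi + O(\xi^2)$, $\Theta(\xi) = \Theta_0 + O(\xi^2)$. The assumption of the theorem is a list of smallness conditions, each one designed to make a specific nonlinear term of the reduced system subdominant: the convective term is controlled by $A^2$ and $A^2/\Theta_0$, the pressure term by $P_\delta \Theta_0$ and $P_\delta A$, the degenerate viscous term by $P_\delta^{1-\alpha} \delta^2$ and $P_\delta^{1-\alpha} \Theta_0/A$, and so on. The output of this stage is Cauchy data $\big(P(\delta), U(\delta), U'(\delta), \Theta(\delta), \Theta'(\delta)\big)$ with magnitudes controlled by the small parameters.

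In the second stage I extend the solution from $\xi = \delta$ to $\xi = +\infty$. Since $P(\delta) = P_\delta > 0$ and the coefficients of the reduced system are then non-degenerate, Cauchy--Lipschitz gives local existence past $\delta$, and global continuation together with the correct algebraic decay as $\xi \to \infty$ follows from a priori estimates patterned on those of \cite{GI} for the constant-coefficient case, adapted here to the density-dependent viscosity. These estimates again rely on the smallness hypothesis to close.

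The main obstacle is the near-origin analysis in Stage 1. The reduced system is simultaneously singular in two distinct ways: the $1/\xi$ factors coming from the radial Laplacian, and the degeneracy $P^\alpha \to 0$ of the viscosity as $\xi \to 0$. The weighted-norm fixed point argument must track very carefully how each of these singularities interacts with the prescribed Taylor profile, and show that every nonlinear contribution to the iteration carries at least one factor of a small parameter from the hypothesis. This precise bookkeeping is what dictates the somewhat involved form of the smallness assumption in the statement, and it also explains why the borderline case $\alpha = 1$, in which the vanishing rate of $P^\alpha$ ceases to strictly dominate the relevant scalings, must be excluded from this theorem and treated separately.
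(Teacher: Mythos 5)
Your high-level architecture matches the paper's: an integrated form of the mass equation expressing $P$ through $U$, a local fixed-point construction on $[0,\delta]$ in a weighted space, continuation to $[\delta,\infty)$ with a priori estimates, and the interpretation of the smallness hypothesis as a term-by-term control. However, there is a concrete gap in your near-origin analysis. You assert that the fixed-point space should enforce $U(\xi) = A\xi + O(\xi^2)$ and $\Theta(\xi) = \Theta_0 + O(\xi^2)$. For $U$ this is false: because the density degenerates like $P(r) \sim (r/\delta)^{\frac{dA}{1/2-A}}$ and the viscosity $P^\alpha$ degenerates with it, the correction to $U$ is only $O\big(r^{1+(1-\alpha-\varepsilon)dA}\big)$ for small $\varepsilon$, a power barely above $1$ when $A$ is small. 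A norm built to enforce $O(r^2)$ for $U - Ar$ would fail to be stable under the iteration — the nonlinear terms generated by the degenerate viscosity produce exactly this lower-order correction, and the map would leave the ball. The paper accounts for this by including in the norm a weighted control of $\big(U(r)/r\big)'$ with the specific factor $r\,(r/\delta)^{-(1-\alpha-\varepsilon)dA}$, and the smallness condition $\frac{P_\delta^{1-\alpha}\Theta_0}{A} \ll 1$ is precisely what makes the associated exponential weight $e^{\alpha\tilde V}$ a bounded perturbation of $1$.

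A second, related omission is that you do not specify the integro-differential reformulation that makes the fixed point tractable. The paper does not iterate directly on the second-order ODEs; it multiplies by carefully chosen integrating factors $e^{\alpha\tilde V}$, $e^{W}$, $e^{Z}$ (adapted to the degenerate coefficient $P^\alpha$ and using $2\mu_0 + d\lambda_0 = 0$ to collapse the viscous terms into a total derivative) and integrates twice, arriving at Volterra-type integral operators for $U$ and $\Theta$ which are then contracting on the weighted ball. Without writing down this reformulation, there is no concrete map $\Phi$ to which Banach's theorem applies, and the claim that each nonlinear contribution carries a small factor cannot be checked. Since the delicate step is exactly how the two singularities you correctly identify (the $1/\xi$ factors and the degeneracy of $P^\alpha$) interact inside these integral operators, the proof is not complete without this reformulation and the corrected weight.
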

In the second case ($\alpha=1$), we prove
\begin{thm}
Let $d \geqslant 3.$ Assume $2\mu_0+d\lambda_0>0.$ Then if $ A \log \delta ^{-1} +  P_{\delta} + \delta $ is small enough, there exists a solution to \eqref{eq:cNS} of the form \eqref{expanders} such that
\begin{align*}
P(\delta) = P_{\delta}, U(0)=0, U'(0)=A, \Theta(0)=\frac{2\mu_0+\lambda_0}{R}A>0, \Theta'(0)=0. 
\end{align*}
\end{thm}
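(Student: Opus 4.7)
The plan is to reduce the PDE system \eqref{eq:cNS} under the ansatz \eqref{expanders} to a coupled ODE system for the profiles $(P,U,\Theta)$ in the variable $r = |x|/\sqrt{t}$, and then to solve the resulting boundary value problem on $[0,\delta]$ by a fixed-point argument. Substituting the ansatz, the continuity equation becomes a first-order linear ODE of the form $P'(U-r/2) + P(U' + (d-1)U/r) = 0$, which can be integrated explicitly to give $P$ as an exponential functional of $U$. The momentum equation becomes a second-order ODE for $U$ whose top-order coefficient is $(2\mu_0+\lambda_0)\,P$ (because $\alpha=1$), and the energy equation becomes a second-order ODE for $\Theta$ with constant-coefficient top order (coming from heat conduction).

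Next I would perform an indicial analysis at $r=0$. Using cavitation $P(0)=0$ and $U(r)\sim A r$, linearization of the continuity equation yields $P(r)\sim c\, r^{\kappa}$ with $\kappa=\kappa(A)$ small and positive (in particular $\kappa<1$ for small $A$, so $P'$ is singular at the origin). For the degenerate momentum equation to admit a regular velocity profile, the leading singularities of the pressure gradient $R(P\Theta)'$ and of the viscous stress $\partial_r\bigl[(2\mu_0+\lambda_0)P(U'+(d-1)U/r)\bigr]$ at $r=0$ must cancel; matching the coefficients of $r^{\kappa-1}$ on both sides forces exactly the compatibility relation $R\,\Theta(0)=(2\mu_0+\lambda_0)\,U'(0)$, which is the prescribed value $\Theta(0)=\frac{2\mu_0+\lambda_0}{R}A$. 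A similar expansion shows that the symmetry condition $\Theta'(0)=0$ is compatible, and computes the $O(r^2)$ correction to $\Theta$.

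With this cancellation in place, I would construct an approximate solution $(P_{\mathrm{app}},U_{\mathrm{app}},\Theta_{\mathrm{app}})$ near the origin matching the prescribed data, and then set up a contraction on a Banach space of perturbations on $[0,\delta]$, with norms weighted by powers and logarithms of $r$ designed to absorb the near-origin degeneracy. The map is: given a candidate $U$, recover $P$ from the explicit continuity formula, then solve the momentum and energy ODEs by integration from $r=0$ with the prescribed initial data $U(0)=0,\ U'(0)=A,\ \Theta(0)=\frac{2\mu_0+\lambda_0}{R}A,\ \Theta'(0)=0$, and rescale the integration constant in the expression for $P$ to enforce $P(\delta)=P_\delta$. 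Standard Banach fixed point then closes provided $A\log\delta^{-1}+P_\delta+\delta$ is small.

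The main obstacle is the degeneracy of the momentum equation at $r=0$: the compatibility choice of $\Theta(0)$ cancels the most singular contribution, but the subleading remainder is only borderline integrable, producing precisely the $\log\delta^{-1}$ loss in the smallness hypothesis (rather than the algebraic weights that suffice when $\alpha<1$, treated in the first theorem). Quantifying this cancellation to sufficient accuracy and tracking the dependence of $P$---defined as an exponential of an integral of $U/r$-type quantities---on perturbations of $U$, while keeping the fixed point contraction bounded by $A\log\delta^{-1}+P_\delta+\delta$, will be the technically delicate part.
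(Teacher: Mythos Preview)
Your overall outline---reducing to ODEs for the profiles, integrating the continuity equation, identifying the compatibility constraint on $\Theta(0)$ via an analysis at $r=0$, and running a fixed point on $[0,\delta]$---matches the paper's strategy. However, there are two points where your plan diverges from what actually makes the argument work.

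First, and most importantly, you treat the momentum equation as a genuinely degenerate second-order ODE with top-order coefficient $(2\mu_0+\lambda_0)P$, and propose to handle this degeneracy by approximate solutions and logarithmically weighted norms. The paper instead exploits a structural simplification specific to $\alpha=1$: after integrating the momentum equation once, every surviving term carries a factor of $P$, so one may divide through by $P$. This leaves a \emph{non-degenerate} first-order equation for $U'+\tfrac{d-1}{r}U$ whose principal part has constant coefficient $(2\mu_0+\lambda_0)$, and the integrating factor becomes simply $e^{W(r)}$ with $W(r)=r^2/\bigl(4(2\mu_0+\lambda_0)\bigr)$, independent of $P$. The compatibility condition on $\Theta_0$ then drops out transparently from the $O(1)$ balance in this divided equation---and a careful computation gives $\Theta_0=(2\mu_0+d\lambda_0)A/R$ rather than $(2\mu_0+\lambda_0)A/R$; the simplified statement you were handed contains a typo, and a correct indicial analysis should have detected this, since the term $2\mu_0 P^{-1}\int_0^r P'\tfrac{d-1}{r_1}U\,dr_1$ contributes $2\mu_0(d-1)A$ at leading order. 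With this reformulation the fixed-point norms use only algebraic weights, namely $r^{1-\varepsilon}\bigl|(U/r)'\bigr|$ with $A\ll\varepsilon\ll 1$, not logarithms; the factor $\log\delta^{-1}$ in the smallness hypothesis enters through the requirement $\delta^\varepsilon/\varepsilon\ll 1$ combined with $A\ll\varepsilon$, not through logarithmic weights in the function space.

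Second, your proposal stops at local existence on $[0,\delta]$. A solution of the form \eqref{expanders} requires the profiles to be defined on all of $[0,\infty)$, so a global continuation step (via bootstrapped a priori bounds, carried out as in the $0<\alpha<1$ case) is still needed to complete the proof.
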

Note that for $0<\alpha<1,$ we construct a three parameter family of solutions, and for $\alpha=1$ we obtain a two dimensional family (the choice of $\Theta_0$ is not free). In both cases we also obtain a precise description of the shape of the profiles. Details can be found in the full statements of these theorems in Section \ref{fwrd}.
\\
\indent In the second part of the paper, we show that \eqref{eq:cNS} does not have small solutions of shrinker type \eqref{shrinkers}. We prove the following result (in simplified form here):
\begin{thm}
Consider a cavitating solution to \eqref{eq:cNS} of the form \eqref{shrinkers}. Let $\varepsilon>0.$ \\
Assume that 
\begin{align*}
\sup_{r>0} \bigg( \langle r \rangle^2 \Theta + P^{1-\alpha}+ \bigg \vert \frac{U}{r \Theta} \bigg \vert \bigg)  + \sup_{r>\varepsilon} \bigg \vert \frac{U'}{r \Theta'} \bigg \vert  
\end{align*}
is sufficiently small. \\
Then $U \equiv \Theta \equiv 0, P = Constant.$
\end{thm}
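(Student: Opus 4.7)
The plan is to substitute the shrinker ansatz \eqref{shrinkers} into \eqref{eq:cNS}, derive the profile ODEs for $P, U, \Theta$, and then produce an integral energy identity which, under the smallness hypothesis, forces each contribution to vanish. Writing $V(r) = U(r) + r/2$ for the effective velocity in self-similar coordinates, the continuity equation integrates to
\[
r^{d-1} P(r) V(r) = \frac{d}{2} \int_0^r s^{d-1} P(s)\,ds,
\]
where the boundary contribution at $r=0$ disappears thanks to the cavitation $P(0)=0$. The momentum and energy balances then yield two further ODEs for $U$ and $\Theta$ whose viscous and heat-conduction terms carry the degenerate coefficients $P^\alpha$, while the smallness hypothesis already gives the pointwise bounds $\Theta \lesssim \varepsilon/\langle r\rangle^2$, $|U|\lesssim \varepsilon \,r\,\Theta$, and $P^{1-\alpha}\lesssim \varepsilon$.

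The next step is to test the momentum equation against $U$ and the energy equation against $\Theta$ (or a comparable quantity), integrating against the volume element $r^{d-1}\,dr$ on $(0,\infty)$. After integration by parts, cavitation kills the boundary terms at the origin, while the decay $\langle r\rangle^2\Theta\lesssim\varepsilon$ handles the terms at infinity; what remains are coercive quadratic quantities such as $\int_0^\infty r^{d-1} P^\alpha |U'|^2\,dr$ and $\int_0^\infty r^{d-1}|\Theta'|^2\,dr$, together with transport, pressure, and nonlinear cross terms. Each of the latter is controlled by the smallness hypotheses: the bound on $|U/(r\Theta)|$ converts momentum--energy couplings into $\varepsilon$ times the coercive energy; the bound on $P^{1-\alpha}$ converts pressure-type terms via $P\Theta = P^{1-\alpha}\cdot P^\alpha\Theta$ into a small multiple of the viscous dissipation; and the derivative bound on $\{r>\varepsilon\}$ absorbs the remaining viscous cross term away from the cavitation region. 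One obtains an inequality of the form $\mathcal{E}[U,\Theta]\le C\varepsilon\,\mathcal{E}[U,\Theta]$ for a suitable coercive $\mathcal{E}$; taking $\varepsilon$ small enough forces $U\equiv\Theta\equiv 0$, and the integrated continuity identity then collapses to $(r/2)P'\equiv 0$, so $P$ is constant.

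The main obstacle is the degeneracy of the viscous coercivity at the cavitation point $r=0$, where $\mu(\rho)=\mu_0 P^\alpha$ vanishes and the usual elliptic estimates lose control. The smallness of $\sup P^{1-\alpha}$ is the key structural assumption: it bounds $P$ itself (for $0<\alpha<1$) and makes the nonlinear pressure and convective terms perturbatively absorbable by the degenerate viscous dissipation, rather than leading-order. The gap between the global assumption on $|U/(r\Theta)|$ and the $\{r>\varepsilon\}$ assumption on $|U'/(r\Theta')|$ is a further indication that the argument near the origin must be run through the integrated continuity identity and the cavitation, rather than through any direct comparison of derivatives. Once these two regimes are glued together, the estimate closes and the triviality of the profile follows as in \cite{GIL}.
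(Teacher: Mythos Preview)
Your proposal is a plausible high-level sketch, but it misses the structural ingredient that actually makes the estimate close. The difficulty is not the nonlinear or pressure terms---it is the \emph{linear transport terms} $C_V P(\tfrac{r}{2}+U)\Theta'$ and $P(\tfrac{r}{2}+U)U'$ coming from the self-similar scaling. Their coefficients grow like $Pr$ and are not small under the hypotheses; testing the raw equations against $(\Theta,U)$ with the unweighted measure $r^{d-1}\,dr$ leaves these first-order terms at leading order, and they cannot be absorbed into the dissipation $\int r^{d-1}P^\alpha|U'|^2$ or $\int r^{d-1}|\Theta'|^2$ by any smallness of $P^{1-\alpha}$, $|U/(r\Theta)|$, or $|U'/(r\Theta')|$. (Concretely, for $\Theta\sim r^{-2}$ your coercive integral $\int r^{d-1}|\Theta'|^2\,dr$ already diverges for $d\ge 6$, so the estimate cannot even be formulated.)

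The paper handles this by first writing the $(\Theta,U)$ system as
\[
\widetilde A(r)\,\frac{d}{dr}\!\begin{pmatrix}\Theta\\U\end{pmatrix}
- B\Big(\frac{d^2}{dr^2}+\frac{d-1}{r}\frac{d}{dr}\Big)\!\begin{pmatrix}\Theta\\U\end{pmatrix}
=\text{(lower order)},
\]
with $B=\mathrm{diag}(\kappa,2\mu+\lambda)$, and then multiplying by the matrix integrating factor $\exp\!\big(-\!\int_0^r B^{-1}\widetilde A\big)$. This converts the left side into pure divergence form $-\nabla\!\cdot\!\big(e^{A(r)}\nabla(\Theta,U)^T\big)$, so the transport terms are \emph{absorbed into the weight} rather than treated perturbatively. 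The weight behaves like a Gaussian $e^{-cPr^2}$, which both restores convergence at infinity and supplies the good-signed zero-order terms $-\int e^{-\lambda_{\min}}P\Theta^2$ and $-\int e^{-\lambda_{\max}}\tfrac{P}{2\mu+\lambda}U^2$ on the right side. Only after this do the smallness hypotheses enter: they control the off-diagonal pieces of the weight (via diagonalization of $A(r)$ and bounds on $q_{12},q_{21}$), and Hardy's inequality near $r=0$ takes care of the cavitation region. Your outline skips the integrating-factor step entirely, and without it the inequality $\mathcal E\le C\varepsilon\,\mathcal E$ does not follow.
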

We show a similar result in the companion paper \cite{GIL}, when the Lam\'{e} coefficients are constant. Note however that the two approaches are different. Indeed some key cancellations are no longer available in the density-dependent viscosity case. Therefore we develop a different method tailored to our setting of solutions exhibiting cavitation in the present work. 

\section{Existence of expanders} \label{fwrd}
In this section we construct expander solutions to \eqref{eq:cNS}, that is solutions of the type \eqref{expanders}. Plugging this ansatz into \eqref{eq:cNS}, we obtain the following system of ODEs for the profiles $P,U$ and $\Theta:$
\begin{equation}\label{ODEs:fwrd}
\begin{cases}
\displaystyle 
-\frac{1}{2} r P ' + P' U 
+ P \Big( U' + \frac{d-1}{r} U \Big) 
= 0 , 
\\[5mm]
\begin{split}
\displaystyle 
-\frac{1}{2} 
 P  U 
& 
- \frac{1}{2} r (P U)' 
+ (P U ^2) ' + \frac{d-1}{r} P U^2 
+(P R \Theta)'
\\
= 
& 
(2\mu_0 + \lambda_0) P^{\alpha}
 \Big( U'' + \frac{d-1}{r} U' - \frac{d-1}{r^2} U \Big) 
 \\&+ (2 \mu_0 + \lambda_0) \big(P^{\alpha})' U' + \lambda_0 \big(P^{\alpha} \big)' \frac{d-1}{r} U ,
\end{split}
\\[10mm]
\displaystyle 
\begin{split}
-P 
 \Big( \frac{U^2}{2}
&  + C_V \Theta \Big) 
- \frac{1}{2} r 
  \Big( P \Big( \frac{U^2}{2} + C_V \Theta \Big) \Big) '
+ \Big( U P \Big( \frac{U^2}{2} + C_V \Theta \Big) 
        + U P R \Theta \Big) '
\\
&
+ \frac{d-1}{r} 
  \Big( U P \Big( \frac{U^2}{2} + C_V \Theta \Big)
       + U P R \Theta
  \Big)
- \kappa \Big( \Theta '' + \frac{d-1}{r} \Theta ' \Big)
\\
= 
& 
2\mu_0 P^{\alpha} \Big( (U')^2 + \frac{d-1}{r^2} U^2 \Big) 
+ \lambda_0 P^{\alpha} \Big( U' + \frac{d-1}{r} U \Big) ^2
\\
& 
+ (2\mu_0 + \lambda_0) P^{\alpha}
 \Big( U'' + \frac{d-1}{r}U' - \frac{d-1}{r^2} U \Big) U
\\
&
+ (2 \mu_0 + \lambda_0) \big(P^{\alpha}\big)' U'U + \lambda_0 \big(P^{\alpha} \big)' \frac{d-1}{r} U^2.  
\end{split}
\end{cases}
\end{equation}
As announced in the introduction, we distinguish two cases for the existence of expanders: when $0<\alpha<1$ and when $\alpha=1.$ There are substantial differences between the two: we do not use the same integro-differential formulation for both, and require different conditions on the parameters. Most strikingly, $\Theta_0$ is free for $0<\alpha<1$ but not for $\alpha=1.$

\subsection{Existence when $0<\alpha<1$} \label{frwd-0}
In this section, we show the following:
\begin{thm}\label{main-fwrd-0}
Let $d \geqslant 3$ and $0<\alpha <1.$ Fix $(C_V, \kappa, R, \mu_0, \lambda_0) \in (0,\infty)^{5}$ such that $2\mu_0+d\lambda_0 = 0.$  \\
Then there exists a constant $C(C_V, \kappa, R, \mu_0, \lambda_0) := C$ such that if 
\begin{align*}
A + 
P_\delta +
\delta +  P_\delta ^{1-\alpha} \delta^2 + \frac{P_\delta^{1-\alpha} \Theta_0}{A} 
+ \frac{A\delta}{\Theta_0} + \frac{A^2}{\Theta_0} + P_{\delta} A < C,
\end{align*}
there exists a solution $(P,U,\Theta) \in \mathcal{C}^{\frac{dA}{1/2-A}} \times \mathcal{C}^1 ([0,\infty)) \times  \mathcal{C}^1([0,\infty))$ to \eqref{ODEs:fwrd} such that 
\begin{align*}
P(0)=0, P(\delta) = P_{\delta}, U(0)=0, U'(0)=A, \Theta(0) = \Theta_0 >0, \Theta'(0)=0.
\end{align*}
Near $0,$ the behavior of the solution is given by (here $0<\varepsilon<1-\alpha$):
\begin{align*}
P(r) &= P_{\delta} \big( \frac{r}{\delta} \big)^{\frac{dA}{1/2-A}} + O \bigg( \big( \frac{r}{\delta} \big)^{1+\frac{dA}{1/2-A} +(1-\alpha-\varepsilon)dA } \bigg) , \\
U(r) &=   Ar + O(r^{1+(1-\alpha-\varepsilon)dA}), \\
\Theta(r) &= \Theta_0 + O(r^2).
\end{align*}
Moreover the profiles satisfy the following global bounds:
\begin{align*}
P(r) & \simeq P_{\delta} \min \Bigg[1, \bigg( \frac{r}{\delta} \bigg)^{\frac{2dA}{1-2A}} \Bigg],\\
\vert U(r) \vert \lesssim \frac{Ar}{(1+P_{\delta}^{1/2-\alpha/2}r)^2}, & \ \ \ \ \  \vert U'(r) \vert \lesssim \frac{A}{(1+P_{\delta}^{1/2-\alpha/2}r)^2}, \\
0 \leqslant \Theta(r) \lesssim \frac{1}{(1+\sqrt{P_{\delta}}r)^2},& \ \ \ \ \ \vert \Theta'(r) \vert \lesssim \frac{\sqrt{P_{\delta}}r}{(1+\sqrt{P_{\delta}}r)^2}.
\end{align*}
Finally as $r \to +\infty$, there exist $P_{\infty} >0,  U_{\infty}>0, \Theta_{\infty} >0$ such that
\begin{align*}
P(r) = P_{\infty} + O \big( \frac{1}{r^2}\big), \ \ \ U(r) = \frac{U_{\infty}}{r} +  O \big( \frac{1}{r^3}\big), \ \ \  \Theta(r) = \frac{\Theta_{\infty}}{r^2} + O \big( \frac{1}{r^4}\big).
\end{align*}
\end{thm}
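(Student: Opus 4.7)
The plan is to decouple the pressure from the system by integrating the continuity equation, and then to recast the remaining two equations for $(U,\Theta)$ as a fixed-point problem in a weighted Banach space whose norms already encode the asymptotics asserted in the theorem.

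First, the continuity equation is separable in $P$: it rewrites as
$$\frac{P'(r)}{P(r)} = -\frac{U'(r) + \frac{d-1}{r}U(r)}{U(r) - r/2},$$
so the condition $P(\delta) = P_\delta$ determines
$$P(r) = P_\delta \exp\left(-\int_\delta^r \frac{U'(s) + \frac{d-1}{s}U(s)}{U(s) - s/2}\, ds\right).$$
Provided $U-r/2$ stays away from $0$ (guaranteed by $A$ small and suitable decay of $U$), the integrand is $\frac{dA}{(1/2-A)s} + O(1)$ near $s=0$, yielding the power law $P(r) \sim P_\delta (r/\delta)^{dA/(1/2-A)}$, while at infinity it is integrable, producing a limit $P_\infty>0$. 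Next, for the momentum equation, I would exploit the identity $U'' + \frac{d-1}{r}U' - \frac{d-1}{r^2}U = \frac{d}{dr}\bigl[r^{-(d-1)}(r^{d-1}U)'\bigr]$ and for the energy equation the analogous $\Theta'' + \frac{d-1}{r}\Theta' = r^{-(d-1)}(r^{d-1}\Theta')'$. Multiplying by the appropriate integrating factor (involving $P^\alpha$ for the momentum equation) and integrating twice against the boundary data $U(0)=0,\ U'(0)=A,\ \Theta(0)=\Theta_0,\ \Theta'(0)=0$, one obtains integro-differential formulations
$$U(r) = Ar + \mathcal{N}_1[U,\Theta](r),\qquad \Theta(r) = \Theta_0 + \mathcal{N}_2[U,\Theta](r),$$
where $P$ is replaced throughout by its expression in terms of $U$ from the previous step.

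I would then run a contraction-mapping argument in the closed ball
$$\|U\|_X := \sup_{r>0}\frac{(1+P_\delta^{(1-\alpha)/2}r)^2}{Ar}|U(r)| + \sup_{r>0}\frac{(1+P_\delta^{(1-\alpha)/2}r)^2}{A}|U'(r)| \le K,$$
together with the analogous norm for $\Theta$ using the weight $(1+\sqrt{P_\delta}\,r)^2$. The two different weights reflect that momentum is diffused by $P^\alpha$, whose relevant scale is $r \sim P_\delta^{-(1-\alpha)/2}$, while heat is diffused by $\kappa$, which together with the transport term $\frac{r}{2}\Theta'$ produces the Gaussian-type scale $r \sim P_\delta^{-1/2}$. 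Within these norms, one checks that each nonlinear term in $\mathcal{N}_1,\mathcal{N}_2$ picks up one of the factors appearing in the smallness condition: the cubic $PU^3$ terms produce $P_\delta A^2$, the pressure coupling produces $P_\delta^{1-\alpha}\Theta_0/A$, the dissipation terms produce $A^2/\Theta_0$ and $A\delta/\Theta_0$, and so on. Smallness of their sum forces both the self-mapping and the contraction estimates. The near-origin expansions stated in the theorem then follow by Taylor-expanding the integral operators for $r \downarrow 0$, while the asymptotic behavior as $r \to \infty$ is read off from the integrability of the integrands and the explicit radial Green's functions used to invert the elliptic parts.

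The main obstacle is the vanishing of the viscosity $P^\alpha$ at the origin. Indeed, $P^\alpha(r) \sim r^{\alpha dA/(1/2-A)}$, so the elliptic operator degenerates and the inverse operator gains only a mild regularization of Hölder exponent $(1-\alpha-\varepsilon)dA$ rather than $1$; this is exactly the loss reflected in the remainder $O(r^{1+(1-\alpha-\varepsilon)dA})$ in the expansion for $U$. Moreover the term $(P^\alpha)'U'$ in the momentum equation is singular like $r^{\alpha dA/(1/2-A) - 1}$, so one cannot afford to treat $P^\alpha$ as a uniformly elliptic coefficient; instead one must integrate against the exact profile of $P^\alpha$ and keep track of cancellations. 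This is the technical heart of the argument and is the reason why several terms in the smallness hypothesis carry the factor $P_\delta^{1-\alpha}$, which is absent in the constant-viscosity analysis of \cite{GI}. Once the degeneracy is handled this way, the remainder of the fixed-point estimates proceed much as in \cite{GI}.
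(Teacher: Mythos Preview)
Your overall architecture---eliminate $P$ via the continuity equation, recast the remaining system as a fixed point, and encode the asymptotics in weighted norms---matches the paper's. But two concrete ingredients are missing, and without them the fixed-point estimates will not close.

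First, you never invoke the hypothesis $2\mu_0 + d\lambda_0 = 0$. In the paper this is not cosmetic: the momentum equation's right-hand side contains, after one regrouping, the term $-2\mu_0 (P^\alpha)' \frac{d-1}{r}U$, which is singular at the origin (order $r^{\alpha dA/(1/2-A)-1}$ times a nonvanishing factor) and does not combine into a total derivative. The condition $2\mu_0 + d\lambda_0 = 0$ is exactly what kills this term and permits the RHS to be written as
\[
(2\mu_0+\lambda_0)P^\alpha\Big\{\Big(U'+\tfrac{d-1}{r}U\Big)' + \alpha\,\frac{U'+\frac{d-1}{r}U}{\frac12 r-U}\Big(U'-\tfrac1r U\Big)\Big\},
\]
after which one introduces the integrating factor $e^{\alpha\widetilde V}$ with $\widetilde V(r)=\int_0^r \frac{U'-\frac{1}{\tilde r}U}{\frac12\tilde r - U}\,d\tilde r$ and only then arrives at a workable integro-differential equation for $U$. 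Your sketch (``multiplying by the appropriate integrating factor involving $P^\alpha$'') does not capture this structure.

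Second, the norm you propose controls $U$ and $U'$ but not $(U/r)'$. The quantity $\widetilde V$ above has derivative $\widetilde V'(r)=\frac{r(U/r)'}{\frac12 r - U}$, so bounding $\widetilde V$ near the origin---and hence bounding $F_U$ and $F_U'$---requires a weighted estimate on $(U/r)'$. The paper's local-existence norm therefore contains the additional piece
\[
\frac{A\,r\,(r/\delta)^{-(1-\alpha-\varepsilon)dA}}{P_\delta^{1-\alpha}\Theta_0}\Big|\Big(\frac{U(r)}{r}\Big)'\Big|,
\]
and closing this piece is the technical heart of the local step (it is where the remainder exponent $(1-\alpha-\varepsilon)dA$ in the theorem actually originates). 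With only $|U|/r$ and $|U'|$ in your norm, you cannot control $\widetilde V$ uniformly as $r\to 0$, and the contraction fails there.

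Relatedly, the paper does \emph{not} run a single global contraction. It first proves local existence on $[0,\delta]$ in the refined norm above, and then propagates the solution by a bootstrap on a separate functional $Z(r)$ carrying the global weights $(1+P_\delta^{1/2-\alpha/2}r)^2$ and $(1+\sqrt{P_\delta}\,r)^2$. Your plan to do everything at once in a global weighted space is plausible in principle, but would still need the $(U/r)'$ control near $0$ and the algebraic simplification coming from $2\mu_0+d\lambda_0=0$; as written, both are absent.
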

The proof is modelled after that of a similar result in \cite{GI} (Section 4): first we find an integro-differential formulation of the problem. Then we construct the solution locally near 0 using a fixed-point argument. After that, we prove global existence, and finally we study the asymptotic behavior of the solutions. 
\\
\indent Note that the main difference compared to \cite{GI} is in the derivation of the integro-differential equation and the local existence part of the proof. These steps are more involved in the present paper due to the degeneracy of the Lam\'{e} coefficients at 0, and the presence of vacuum.

\subsubsection{Integro-differential formulation}
We are aiming at constructing solutions such that for $0 < A < 1/2$ and $\Theta _0 > 0,$ 
\[
P(r)  = O (r^{\frac{2d A}{1- 2A}}), \quad 
U(r) = A r + O(r^{1+(1-\alpha-\varepsilon) \frac{2d A}{1-2 A}}), \quad 
\Theta (r) = \Theta _0 + O(r^2), 
\quad r \to 0 .
\]
First starting with the equation on $P,$ we directly obtain after integration that
\begin{align*}
P(r) = P_{\delta} e^{V(r)-V(\delta)},
\end{align*}
where 
\begin{align}\label{def-V}
V(r) = \int_0 ^r \frac{U' + \frac{d-1}{r_1}U}{\frac{1}{2}r_1 - U} dr_1.
\end{align}
Next we move to the equation satisfied by $U:$ 
\[
\begin{split}
\displaystyle 
-\frac{1}{2} PU - \frac{1}{2} r (PU)'
& 
+ (P U^2)' + \frac{d-1}{r} PU^2 
+(P R \Theta)'
\\
= 
& 
(2\mu_0 + \lambda_0) 
\Big\{ P^{\alpha} \Big( U' + \frac{d-1}{r} U \Big) \Big\}'
    - 2\mu_0 (P^\alpha)' \cdot \frac{d-1}{r} U .
\end{split}
\]
Using the assumption $2\mu_0 + d \lambda_0 = 0$, we can write the right-hand side 
\[
(2\mu_0 + \lambda_0) P^\alpha 
\Big\{ \Big( U' + \frac{d-1}{r} U \Big)'
   + \alpha\frac{U' + \frac{d-1}{r}U}{\frac{1}{2}r - U} \Big( U' - \frac{1}{r} U \Big)
\Big\} .
\]
After dividing by $P^\alpha$, we have 
\begin{equation}\label{0924-1}
\begin{split}
& 
-\frac{1}{2} (r P^{1-\alpha} U) ' 
-\frac{\alpha}{2} r P^{1-\alpha}  \frac{U' + \frac{d-1}{r}U}{\frac{1}{2}r - U} U
+ \frac{1}{P^\alpha} (PU^2)' 
+ \frac{d-1}{r} P^{1-\alpha} U^2  + \frac{(PR\Theta)'}{P^\alpha}
\\
=
& 
(2\mu_0 + \lambda_0) 
\Big\{ \Big( U' + \frac{d-1}{r} U \Big)'
   + \alpha\frac{U' + \frac{d-1}{r}U}{\frac{1}{2}r - U} \Big( U' - \frac{1}{r} U \Big)
\Big\} .
\end{split}
\end{equation}
The equation \eqref{0924-1} can be written 
\begin{equation}\notag 
\begin{split}
& 
-\frac{1}{2} (r P^{1-\alpha} U) ' 
-\frac{\alpha}{2} r P^{1-\alpha}  \frac{U' + \frac{d-1}{r}U}{\frac{1}{2}r - U} U
+ \frac{1}{P^\alpha} (PU^2)' 
+ \frac{d-1}{r} P^{1-\alpha} U^2  + \frac{(PR\Theta)'}{P^\alpha}
\\
=
& 
(2\mu_0 + \lambda_0) 
\Big\{ \Big( U' + \frac{d-1}{r} U \Big)'
   + \alpha\frac{ U' - \frac{1}{r} U}{\frac{1}{2}r - U} \Big(U' + \frac{d-1}{r}U \Big)
\Big\} . 
\end{split}
\end{equation} 
Let
\begin{align}\label{def-V-tild}
\widetilde V(r) := 
\int _0 ^r \frac{U' - \frac{1}{\tilde r}U}{ \frac{1}{2} \tilde r - U}
d\tilde r .
\end{align}
Multiplying the above by $e^{\alpha \widetilde V(r)}$ we get
\begin{equation}\notag 
\begin{split}
& 
-\frac{1}{2} ( e^{\alpha \widetilde V(r)} r P^{1-\alpha} U) ' 
+ e^{\alpha\widetilde V(r)} 
 \Big\{ 
 -\frac{\alpha}{2} r P^{1-\alpha}  \frac{\frac{d}{r}U}{\frac{1}{2}r - U} U
 + \frac{1}{P^\alpha} (PU^2)' 
+ \frac{d-1}{r} P^{1-\alpha} U^2  + \frac{(PR\Theta)'}{P^\alpha}
\Big\} 
\\
=
& 
(2\mu_0 + \lambda_0) 
\Big\{ e^{\alpha \widetilde V(r)} \Big( U' + \frac{d-1}{r} U \Big)
\Big\}'  .
\end{split}
\end{equation} 
Now we integrate over $[0,r]$ 
and divide by $e^{\alpha \widetilde{V}(r)}$: 
\begin{equation}\notag 
\begin{split}
& 
-\frac{1}{2}  r P^{1-\alpha} U
\\
& 
+
e^{-\alpha \widetilde V(r)}\int_0^r 
 e^{\alpha \widetilde V(r_2)} 
 \Big\{ 
  -\frac{\alpha}{2} r_2 P^{1-\alpha}  \frac{\frac{d}{r_2}U}{\frac{1}{2}r_2 - U} U
 +\frac{1}{P^\alpha} (PU^2)' 
+ \frac{d-1}{r_2} P^{1-\alpha} U^2  + \frac{(PR\Theta)'}{P^\alpha}
\Big\} dr_2 
\\
=
& 
(2\mu_0 + \lambda_0)  \Big( U' + \frac{d-1}{r} U \Big)
- e^{-\alpha \widetilde V(r)}(2\mu_0 + \lambda_0) d A.
\end{split}
\end{equation} 
Let \begin{align} \label{def-W}
W(r):= \frac{1}{2\mu_0+\lambda_0} \int_0 ^r \frac{P(\widetilde{r})^{1-\alpha} \widetilde{r}}{2}d\widetilde{r}.
\end{align}
Multiplication by $r^{d-1} e^{W(r)} $ then yields 
\[
\begin{split}
(2\mu_0 + \lambda_0) 
\Big( r^{d-1} e^{W(r) } U \Big) ' 
=
 r^{d-1} e^{W(r)} F_U(r),
\end{split}
\]
where
\begin{align}\label{def-FU}
F_U(r) 
:=
& e^{-\alpha \widetilde V(r)} (2\mu_0 + \lambda_0 ) A 
\\
\notag &
+ e^{-\alpha \widetilde V(r)}\int_0^r 
 e^{\alpha \widetilde V(r_2)} 
 \Big\{ 
  -\frac{\alpha}{2} r_2 P^{1-\alpha}  \frac{\frac{d}{r_2}U}{\frac{1}{2}r_2 - U} U
 +\frac{1}{P^\alpha} (PU^2)' 
+ \frac{d-1}{r_2} P^{1-\alpha} U^2  + \frac{(PR\Theta)'}{P^\alpha}
\Big\} dr_2 .
\end{align}
We obtain the desired integro-differential equation after integrating the above. \\
\\
For the last equation on $\Theta$, we proceed as in \cite{GI}. Therefore we omit the details.\\
We obtain the following integro-differential formulation: 
\begin{equation}\label{0925-1}
\begin{cases}
P(r) = e^{V(r)- V(\delta)} 
\quad \text{for given } \delta > 0, 
\\
\displaystyle 
U(r) = \frac{r^{-d+1}}{2\mu_0 + \lambda _0} 
 \int_0^r r_1 ^{d-1} e^{-W(r) + W(r_1)} F_{U} (r_1)
~dr_1 , 
\\
\displaystyle 
\Theta (r) 
= (d-2) r^{-d+2} \int_0^r 
  r_1 ^{d-3} e^{-Z(r) + Z(r_1)} dr_1 \Theta _0 
  - \frac{U^2}{2C_V}
  + \frac{r^{-d+2}}{\kappa} \int_0^r r_1^{d-2} e^{-Z(r) + Z(r_1)} F_{\Theta} (r_1) dr_1 , 
\end{cases}
\end{equation}
where $V, \widetilde{V}, W$ and $F_U$ are defined as in \eqref{def-V}, \eqref{def-V-tild}, \eqref{def-W}, \eqref{def-FU}, and
\[
\begin{split}
F_\Theta(r_1) 
:= 
& 
U P \Big( \frac{U^2}{2} + C_V \Theta \Big) 
         + U P R \Theta 
+ \frac{d-2}{r_1} \int_0^{r_1}
   \Big( U P \Big( \frac{U^2}{2} + C_V \Theta \Big ) 
       + U P R \Theta 
   \Big)
   dr_2
\\
& - (2\mu_0 + \lambda_0 )
    \Big( P^\alpha UU' + \frac{d-2}{r_1} \int_0^{r_1} P^\alpha UU' ~dr_2 \Big)
\\
&  - \lambda_0 (d-1) 
    \Big( \frac{P^\alpha U^2}{r_1}
      + \frac{d-2}{r_1} \int_0^{r_1} P^\alpha\frac{U^2}{r_2}  dr_2 \Big)
\\
& + \frac{\kappa}{C_V} 
    \Big( UU' + \frac{d-2}{2r_1} U^2 \Big) ,
\end{split}
\]
and 
\[
Z(r) := \frac{C_V}{\kappa} \int_0^r \frac{P(\tilde r) \tilde r}{2 } ~d\tilde r .
\]

\subsubsection{Local existence}
In this section we construct a local solution to the above integro-differential formulation by a fixed point argument. \\
First we define the functional space in which we solve the equation: \\
Let $(1-\alpha)/2 < \varepsilon < 1-\alpha. $ Define
\[
\| (U,\Theta) \|^\delta 
:= \sup_{0<r<\delta} \Big[ 
r^{-1} |U(r)| + |U'(r)| 
+ \frac{A r( \frac{r}{\delta})^{-(1-\alpha-\varepsilon)d A} }{P_\delta^{1-\alpha} \Theta_0 } \Big| \Big( \frac{U(r)}{r} \Big)' \Big|
+ \frac{A}{\Theta_0}|\Theta(r)| + \frac{A}{\Theta_0} r^{-1} |\Theta'(r)|
\Big],
\]
\[
E^\delta = 
\{ (U,\Theta) \in \mathcal C^1 (0,\delta)  \text{ such that } 
\Theta (0) = \Theta _0 , \| (U,\Theta) \|^\delta < \infty
\}.
\]
Note that $E^{\delta}$ differs from the definition in \cite{GI}. Here we need better control of the profile $U$ near 0 to close the estimates. \\
We also define the map $\Phi$ on $B_{E^{\delta}}\big( \big(Ar,\Theta_0 \big),A/2 \big)$ by the formula 
\begin{align*}
&\Phi:(U,\Theta) \mapsto \\
& \Bigg(  \frac{r^{1-d}}{2\mu_0+\lambda_0} \int_0 ^r e^{W(r_1)-W(r)} r_1^{d-1} F_U(r_1) dr_1 , \Theta_0 + \frac{r^{2-d}}{\kappa} \int_0 ^r r_1 ^{d-2} e^{Z(r_1)-Z(r)} F_{\Theta}(r_1) dr_1 - \frac{U^2}{2 C_V} \Bigg).
\end{align*}
The remainder of the subsection is dedicated to the proof of the following lemma, which shows local existence of solutions to \eqref{0925-1}.
\begin{lem}
Assume that
\[ 
A \ll \frac{1}{2}, \qquad 
P_\delta \leq 1 , 
\qquad 
\delta +  P_\delta ^{1-\alpha} \delta^2 + \frac{P_\delta^{1-\alpha} \Theta_0}{A} 
+ \frac{A\delta}{\Theta_0} + \frac{A^2}{\Theta_0} + P_{\delta} A
\ll 1 . 
\]
Then $\Phi$ is a contraction on $B_{E^{\delta}}\big( \big(Ar,\Theta_0 \big),A/2 \big). $
\end{lem}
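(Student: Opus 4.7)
The plan is a standard fixed-point argument: verify that $\Phi$ sends the closed ball $B_{E^\delta}((Ar,\Theta_0), A/2)$ into itself, then establish a Lipschitz estimate on the ball with constant strictly less than $1$. The smallness hypotheses on $A$, $P_\delta$, $\delta$, and the various ratios supply the small factors that drive both steps.

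To begin, I would control the auxiliary quantities $V, \widetilde V, W, Z$ and $P$ on the ball. Since $U/r$ and $U'$ are both close to $A$ by the norm $\|\cdot\|^\delta$ and $\Theta$ is close to $\Theta_0$, direct estimates on the defining integrals yield $V(r) - V(\delta) \approx \frac{dA}{1/2-A}\log(r/\delta)$, hence the power-law profile $P(r) \simeq P_\delta (r/\delta)^{dA/(1/2-A)}$, while $|\widetilde V|$, $|W|$, $|Z|$ remain uniformly small thanks to $A \ll 1/2$, $P_\delta^{1-\alpha}\delta^2 \ll 1$, etc. Hence $e^{\pm\alpha\widetilde V}, e^{\pm W}, e^{\pm Z}$ may all be treated as $1$ up to small multiplicative errors.

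Next I would estimate the source terms $F_U$ and $F_\Theta$. In $F_U$ the leading boundary contribution is proportional to $A$, while each integrand summand carries a factor of $P^{1-\alpha}$ together with $U^2$, $\Theta$, or a similar small quantity; after integration against $\frac{r^{1-d}}{2\mu_0+\lambda_0}\int_0^r r_1^{d-1}\,dr_1$, the leading contribution to $\Phi_1(U,\Theta)$ works out to be close to $Ar$. Analogous estimates on $F_\Theta$ give $\Phi_2(U,\Theta)\approx \Theta_0$, and each of the five pieces of $\|\Phi(U,\Theta)-(Ar,\Theta_0)\|^\delta$ can be bounded by $A/2$ times a smallness parameter. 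Repeating the same estimates on differences, using that every non-leading term is at least bilinear in $(U,\Theta)$ or carries a factor of $P^{1-\alpha}$, then yields the contraction estimate.

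The main obstacle is the bound on $(U/r)'$, whose norm carries the singular weight $(r/\delta)^{-(1-\alpha-\varepsilon)dA}$ divided by $P_\delta^{1-\alpha}\Theta_0$. After differentiating $\Phi_1(U,\Theta)/r$, one must show that the most singular contribution, coming from the pressure term $(PR\Theta)'/P^\alpha$ inside $F_U$, is exactly compensated by the $P_\delta^{1-\alpha}\Theta_0$ in the weight, leaving a positive residual power of $r/\delta$. The choice $(1-\alpha)/2 < \varepsilon < 1-\alpha$ is what makes this work: the upper bound ensures that the weighted expression is controlled near the origin, while the lower bound supplies enough spare decay to absorb an additional $P^{1-\alpha}$ factor that arises upon differentiating $\widetilde V$. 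Carrying this bookkeeping through carefully, forced by the vacuum and the degeneracy of the Lam\'{e} coefficients, is what distinguishes the present argument from the constant-viscosity fixed-point argument of \cite{GI}.
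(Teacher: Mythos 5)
Your high-level plan — verify self-mapping plus a Lipschitz estimate, control the auxiliary quantities $V, \widetilde V, W, Z, P$ on the ball, estimate $F_U, F_\Theta$, and single out the weighted $(U/r)'$ component of the norm as the main difficulty — matches the structure of the paper's proof (``stabilization'' then ``contraction''). But there is a concrete gap in your treatment of the hardest step, and it is exactly the step you flag as the ``main obstacle.''

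When you differentiate $\Phi_1(U,\Theta)/r$ you obtain, schematically,
\[
\Big(\frac{\widehat U}{r}\Big)'(r) = -d\,\frac{\widehat U(r)}{r^2} + \frac{F_U(r)}{(2\mu_0+\lambda_0)\,r} - \frac{W'(r)}{r}\,\widehat U(r).
\]
The first two terms are each of size $O(A/r)$, since $\widehat U\approx Ar$ and $F_U\approx d(2\mu_0+\lambda_0)A$. If you estimate them separately and then apply the weight $\dfrac{A\,r\,(r/\delta)^{-(1-\alpha-\varepsilon)dA}}{P_\delta^{1-\alpha}\Theta_0}$, you get a contribution of order $\dfrac{A^2}{P_\delta^{1-\alpha}\Theta_0}(r/\delta)^{-(1-\alpha-\varepsilon)dA}$, which both blows up as $r\to 0$ and carries a factor $P_\delta^{-(1-\alpha)}$ that the smallness hypotheses do not control. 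Your proposal identifies the pressure term $(PR\Theta)'/P^\alpha$ as ``the most singular contribution,'' but in absolute size it is the smaller one ($O(P_\delta^{1-\alpha}\Theta_0/r)$, while the two terms above are $O(A/r)\gg O(P_\delta^{1-\alpha}\Theta_0/r)$). The missing idea is the integration by parts in the representation of $\widehat U$, which rewrites
\[
-d\,\frac{\widehat U(r)}{r^2} + \frac{F_U(r)}{(2\mu_0+\lambda_0)r}
= \frac{r^{-d-1}}{2\mu_0+\lambda_0}\int_0^r r_1^{\,d}\,e^{-W(r)+W(r_1)}\big(W'(r_1)F_U(r_1)+F_U'(r_1)\big)\,dr_1
\]
and thereby cancels the dangerous $O(A/r)$ leading parts against each other, leaving only quantities of size $O(P_\delta^{1-\alpha}\Theta_0\,r^{-1}(r/\delta)^{(1-\alpha)dA})$. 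Only after this cancellation does the denominator $P_\delta^{1-\alpha}\Theta_0$ in the weight do the compensating work you describe. Without making this cancellation explicit (and without a substitute mechanism), the estimate on the third piece of the norm does not close, so your argument as written would stall precisely at the point you identify as critical. Relatedly, your attribution of a specific role to the constraint $\varepsilon>(1-\alpha)/2$ (``absorbing an additional $P^{1-\alpha}$ factor from differentiating $\widetilde V$'') is not what drives the estimates at this stage; in the part of the argument visible here, only $0<\varepsilon<1-\alpha$ is used, both to keep $(r/\delta)^{(1-\alpha-\varepsilon)dA}$ bounded and to absorb the logarithm that appears in $|P_1^{1-\alpha}-P_2^{1-\alpha}|$ during the contraction step.
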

\begin{proof}
\underline{Stabilization}: 
First we note that
\[
|\widetilde V(r)| 
= \Big| \int_0^r \frac{\tilde r (\frac{U}{\tilde r})'}{\frac{1}{2}\tilde r - U}\Big|
\lesssim P_\delta^{1-\alpha}\Theta_0 
  \int_0^r \tilde r^{-1 } \Big(\frac{r}{\delta}\Big)^{(1-\alpha-\varepsilon)d A}d\tilde r 
\lesssim \frac{P_\delta^{1-\alpha}\Theta_0}{A}\Big(\frac{r}{\delta}\Big)^{(1-\alpha-\varepsilon)d A},
\]
which implies 
\[
|e^{\alpha \widetilde V(r)} -1 | \lesssim \frac{P_\delta^{1-\alpha}\Theta_0}{A} \ll 1 .
\]
We have the estimates 
\[
\begin{split}
P(r) 
& \lesssim  P_\delta \Big( \frac{r}{\delta} \Big) ^{d A}, 
\\
|F_U (r) - d(2\mu_0 + \lambda_0 ) A | 
& 
\lesssim 
\Big( \frac{P_\delta^{1-\alpha}\Theta_0}{A} 
+ P_\delta ^{1-\alpha} A r^2 
+  \frac{P_\delta ^{1-\alpha} \Theta _0}{A} 
+  \frac{P_\delta ^{1-\alpha}\Theta_0}{A}  r^2
\Big) A , 
\\
|F_\Theta (U) |
& 
 \lesssim P_\delta A^3 r^3 +  P_\delta A \Theta _0  r 
 + P_\delta ^\alpha A^2 r
 + A^2 r. 
\end{split}
\]
Define $(\widehat U , \widehat\Theta) := \Phi (A r , \Theta_0)$. 
We deduce that 
\[
\begin{split}
|\widehat{U} - A r|
\lesssim 
& \big( P_\delta ^{1-\alpha} r^2+ 
\frac{P_\delta^{1-\alpha}\Theta_0}{A} 
+ P_\delta ^{1-\alpha} A r^2 
+  \frac{P_\delta ^{1-\alpha} \Theta _0}{A} 
+  \frac{P_\delta ^{1-\alpha}\Theta_0}{A}  r^2\Big) A r, 
\\
|\widehat{U}' - A |
\lesssim 
& \big( P_\delta ^{1-\alpha} r^2
+ \frac{P_\delta^{1-\alpha}\Theta_0}{A} 
+ P_\delta ^{1-\alpha} A r^2 
+  \frac{P_\delta ^{1-\alpha} \Theta _0}{A} 
+  \frac{P_\delta ^{1-\alpha}\Theta_0}{A}  r^2
\Big) A , 
\\
|\widehat{\Theta} - \Theta_0| 
\lesssim 
& \Big( P_\delta \Theta _0  +A^2\Big) r^2, 
\\
|\widehat{\Theta}'| 
\lesssim 
& \Big( P_\delta \Theta _0  + A^2 \Big) r . 
\end{split}
\]
As for $(\widehat U(r)/r)'$, we write 
\[
\Big( \frac{\widehat U(r)}{r} \Big)' 
= -d \frac{U(r)}{r^2} 
+ \frac{r^{-1}}{2\mu_0 + \lambda_0} F_U(r) 
 - \frac{W'(r) }{r}U(r) .
\]
The third term satisfies 
\[
\Big| \frac{W'(r) }{r}U(r) \Big| 
\lesssim P_\delta ^{1-\alpha} \Big(\frac{r}{\delta}\Big)^{(1-\alpha)d A} A r
\lesssim \Big(\frac{A r^2}{\Theta_0} \Big) P_\delta ^{1-\alpha} \Theta_0 r^{-1} \Big(\frac{r}{\delta}\Big)^{(1-\alpha)d A}.
\]
By integration by parts, the sum of the first and the second terms is 
\[
\begin{split}
& 
-d \frac{U(r)}{r^2} + \frac{r^{-1}}{2\mu_0 + \lambda_0} F_U(r) 
\\
=
& \frac{r^{-d-1}}{2\mu_0 + \lambda_0 } \int_0^r r_1 ^{d} e^{-W(r) + W(r_1)} 
  \Big( W'(r_1)F_U(r_1) + F_U ' (r_1) \Big) ~dr_1 
\\
= 
& 
O\Bigg( \Big(\frac{A r^2}{\Theta_0} \Big) P_\delta ^{1-\alpha} \Theta_0 r^{-1} \Big(\frac{r}{\delta}\Big)^{(1-\alpha)d A}
\Bigg)   
+\frac{r^{-d-1}}{2\mu_0 + \lambda_0 } \int_0^r r_1 ^{d} e^{-W(r) + W(r_1)} 
   F_U ' (r_1)  ~dr_1 . 
\end{split}
\]
To handle the last term, we write
\[
\begin{split}
|F_U'(r) |
& 
\lesssim  | \widetilde V'| e^{-\alpha \widetilde V} A 
  + P_\delta ^{1-\alpha} A ^2  r \Big(\frac{r}{\delta}\Big)^{(1-\alpha)d A} 
  + P_\delta ^{1-\alpha} A \Theta_0r^{-1} \Big(\frac{r}{\delta}\Big)^{(1-\alpha)d A}
  + P_\delta ^{1-\alpha} \Theta_0  r \Big(\frac{r}{\delta}\Big)^{(1-\alpha)d A} 
\\
& 
\lesssim \Big( A + \frac{A^2}{\Theta_0} r^2 + r^2\Big)  
  P_\delta ^{1-\alpha} \Theta_0 r^{-1} \Big(\frac{r}{\delta}\Big)^{(1-\alpha)d A}, 
\end{split}
 \]
 and hence, 
\[
\frac{r^{-d-1}}{2\mu_0 + \lambda_0 } \int_0^r r_1 ^{d} e^{-W(r) + W(r_1)} 
   F_U ' (r_1)  ~dr_1 
\lesssim \Big( A + \frac{A^2}{\Theta_0} r^2 + r^2\Big)  
 P_\delta ^{1-\alpha} \Theta_0 r^{-1} \Big(\frac{r}{\delta}\Big)^{(1-\alpha)d A}. 
\]
We can then conclude that
\[
\begin{split}
\Big|\Big(\frac{\widehat U(r)}{r} \Big) ' \Big| 
\lesssim \Big( \frac{Ar^2}{\Theta_0} + A + \frac{A^2}{\Theta_0} r^2 + r^2\Big)  
P_\delta ^{1-\alpha} \Theta_0r^{-1} \Big(\frac{r}{\delta}\Big)^{(1-\alpha)d A}. 
\end{split}
\]
\\
\noindent
\underline{Contraction:} Let $(U_i, \Theta_i) \in B_{E^\delta} ((A r, \Theta _0), A /2), i=1,2.$ \\ Let $D := \Vert (U_1, \Theta_1) - (U_2, \Theta_2) \Vert^{\delta}.$ 
\\
Denote $\big(\widetilde{U_i}, \widetilde{\Theta_i} \big) = \Phi \big((U_i,\Theta_i) \big), i =1,2.$ \\
Using that 
\begin{align*}
\vert P_1 ^{1-\alpha} - P_2 ^{1-\alpha} \vert &  \leqslant P_{\delta}^{1-\alpha} D \ln \big( \frac{\delta}{r} \big)  \bigg( \frac{r}{\delta} \bigg)^{(1-\alpha)dA}  \\ 
                                            & \lesssim P_{\delta}^{1-\alpha} \frac{D}{A} \bigg(\frac{r}
{\delta}\bigg)^{(1-\alpha-\varepsilon)dA},
\end{align*} 
we deduce the bounds
\begin{align*}
\vert W_1 (r) - W_2 (r) \vert & \lesssim P_{\delta}^{1-\alpha} \frac{D}{A} \big( \frac{r}{\delta} \big)^{(1-\alpha-\varepsilon)dA} r^2, \  \ \ \ \vert Z_1(r) - Z_2(r) \vert \lesssim  P_{\delta} \frac{D}{A} \big( \frac{r}{\delta} \big)^{(1-\varepsilon)dA} r^2,  \\
\vert W_1 ' (r) - W_2 ' (r) \vert & \lesssim P_{\delta}^{1-\alpha} \frac{D}{A} \big(\frac{r}{\delta}\big)^{(1-\alpha-\varepsilon)dA} r,  \ \ \ \ \vert Z_1'(r)-Z_2'(r) \vert \lesssim P_{\delta} \frac{D}{A} \big( \frac{r}{\delta} \big)^{(1-\varepsilon)dA} r, \\
\vert F_{U_1}(r) - F_{U_2}(r) \vert & \lesssim \bigg( \frac{P_{\delta}^{1-\alpha} \Theta_0}{A} + P_{\delta}^{1-\alpha} A \delta^2 \bigg) D, \\
\vert F_{U_1}'(r)-F_{U_2}'(r) \vert & \lesssim \bigg(D + \frac{AD}{\Theta_0} \delta^2 + \frac{D}{A}\delta^2    \bigg)  P_{\delta}^{1-\alpha} \Theta_0 r^{-1} \big( \frac{r}{\delta} \big)^{(1-\alpha-\varepsilon)dA}, \\
\vert F_{\Theta_1}(r) - F_{\Theta_2}(r) \vert & \lesssim \big(P_{\delta}   + \frac{P_{\delta} A^2 \delta^2}{\Theta_0} + \frac{P_{\delta}^{\alpha} A}{\Theta_0} + \frac{A}{\Theta_0} \big) D r \Theta_0.
\end{align*}
Then we can estimate
\begin{align} \label{U-contr}
\notag \vert \widetilde{U_1} (r) - \widetilde{U_2} (r) \vert & \lesssim r^{1-d} \int_0 ^r r_1 ^{d-1} \vert e^{-W_1(r)+ W_1 (r_1)} - e^{-W_2(r)+ W_2 (r_1)} \vert \vert F_{U_1}(r_1) \vert dr_1 \\
\notag & + r^{1-d} \int_0 ^r r_1 ^{d-1}  e^{-W_2(r)+ W_2 (r_1)} \vert F_{U_1}(r_1) - F_{U_2}(r_1) \vert dr_1 \\
& \lesssim \bigg( \delta^2 P_{\delta}^{1-\alpha} + \frac{P_{\delta}^{1-\alpha} \Theta_0}{A} \bigg) Dr.
\end{align}
In similar fashion, we have 
\begin{align} \label{U'-contr}
\vert \widetilde{U_1} '(r) - \widetilde{U_2} '(r) \vert \lesssim \bigg( \delta^2 P_{\delta}^{1-\alpha} + \frac{P_{\delta}^{1-\alpha} \Theta_0}{A} \bigg) D.
\end{align}
Now we write
\begin{align} \label{U''-contr}
\notag \bigg \vert \bigg( \frac{\widetilde{U_1}}{r} \bigg)'-\bigg( \frac{\widetilde{U_2}}{r} \bigg)' \bigg \vert & \lesssim \frac{1}{r} \vert W_1'(r) U_1(r) - W_2'(r) U_2(r) \vert  \\
\notag &+ r^{-d-1} \int_0 ^r r_1^d \bigg \vert e^{-W_1(r)+ W_1 (r_1)} W_1'(r_1) F_{U_1}(r_1)- e^{-W_2(r)+ W_2 (r_1)} W_2'(r_1) F_{U_2}(r_1)\bigg \vert dr_1 \\
\notag &+ r^{-d-1} \int_0 ^r r_1 ^d \bigg \vert e^{-W_1(r)+ W_1 (r_1)} F_{U_1}'(r_1)- e^{-W_2(r)+ W_2 (r_1)} F_{U_2}'(r_1)\bigg \vert dr_1 \\
& \lesssim \frac{P_{\delta}^{1-\alpha} \Theta_0}{A} r^{-1} \big( \frac{r}{\delta} \big)^{(1-\alpha-\varepsilon)dA} D \Bigg( \frac{A}{\Theta_0} \delta^2 + A + \delta^2 \Bigg).
\end{align}
Similarly, we obtain the following estimates on $\Theta:$
\begin{align} \label{Thet-contr}
\vert \widetilde{\Theta_1} (r) - \widetilde{\Theta_2} (r) \vert & \lesssim \big(P_{\delta}   + \frac{P_{\delta} A^2 \delta^2}{\Theta_0} + \frac{P_{\delta}^{\alpha} A}{\Theta_0} + \frac{A}{\Theta_0} \big) D r^2 \Theta_0 + D A r^2,\\
\label{Theta'-contr} \vert \widetilde{\Theta_1}' (r) - \widetilde{\Theta_2} ' (r) \vert & \lesssim \big(P_{\delta}   + \frac{P_{\delta} A^2 \delta^2}{\Theta_0} + \frac{P_{\delta}^{\alpha} A}{\Theta_0} + \frac{A}{\Theta_0} \big) D r \Theta_0 + D A r.
\end{align}
Putting all the estimates \eqref{U-contr}, \eqref{U'-contr}, \eqref{U''-contr}, \eqref{Thet-contr} and \eqref{Theta'-contr} together, the desired result follows given our smallness assumptions.  
\end{proof}
To prove H\"{o}lder continuity of $P,$ we can repeat an argument from \cite{GI} (page 20). We omit the details here. 

\subsubsection{Global existence}
We define:
\begin{align*}
Z(r) &= \sup_{0<s<r} \frac{(1+P_{\delta}^{1/2-\alpha/2} s)^2}{M_1 s} \vert U(s) \vert + \frac{(1+P_{\delta}^{1/2-\alpha/2} s)^2}{M_1'} \big \vert U'(s) + \frac{d-1}{s}U(s) \big \vert \\
&+ \frac{(1+\sqrt{P_{\delta}}s)^2}{M_2} \Theta(s) + \frac{(1+\sqrt{P_{\delta}}s)^2}{M_2 P_{\delta} s} \vert \Theta'(s) \vert,
\end{align*}
where the constants $M_1, M_1',M_2$ are chosen so that 
\begin{align*}
M_2 \ll M_1 \ll M_1' \ll 1, A \ll M_1, \Theta_0 \ll M_2, A^2 \ll P_{\delta} M_2, M_1' \log \frac{1}{\delta^2 P_{\delta}^{1-\alpha}} \leqslant 1, \\
\frac{P_\delta^{1-\alpha}}{A}M_1 ' \leq 1 ,
 M_2 \ll M_1 P_{\delta}^{1/2+\alpha}, M_1 M_1' \ll P_{\delta} M_2, M_1^3 \ll P_{\delta}^{1-2\alpha} \Theta_0, M_1 M_1 ' \ll \Theta_0 P_{\delta}^{1-\alpha}.
\end{align*}
We make use of $Z$ and a bootstrap argument to prove global existence. This is contained in the following lemma:
\begin{lem}
We have $Z(\delta) \leqslant \frac{1}{2}.$ \\
Moreover if $Z(r) \leqslant 1$ for some $r> \delta,$ then the stronger bound $Z(r) \leqslant \frac{1}{2}$ holds.
\end{lem}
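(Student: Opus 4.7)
My plan is to treat the two assertions separately, both by revisiting the integro-differential formulation \eqref{0925-1} and exploiting the pointwise bounds it yields.

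The bound $Z(\delta) \leq \tfrac{1}{2}$ is essentially a direct consequence of the local existence lemma. Since $P_\delta^{1-\alpha}\delta^2 \ll 1$, the prefactors $(1+P_\delta^{1/2-\alpha/2}s)^2$ and $(1+\sqrt{P_\delta}\,s)^2$ appearing in $Z$ are both comparable to $1$ on $(0,\delta)$. The local estimates produced in the previous lemma then give $|U(s)|\lesssim As$, $|U'(s)+\tfrac{d-1}{s}U(s)|\lesssim dA$, $|\Theta(s)|\leq \Theta_0+O((P_\delta\Theta_0+A^2)\delta^2)$, and $|\Theta'(s)|\lesssim (P_\delta\Theta_0+A^2)s$. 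Under the admissibility conditions $A\ll M_1$, $A\ll M_1'$, $\Theta_0\ll M_2$, $A^2\ll P_\delta M_2$, each of the four ratios making up $Z(\delta)$ is bounded well below $\tfrac{1}{2}$, so their sum is too.

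For the bootstrap step, I suppose $Z(r)\leq 1$ on $[\delta,r]$ and re-derive the four bounds with a factor of two to spare. The hypothesis provides pointwise control on $U, U'+\tfrac{d-1}{s}U, \Theta, \Theta'$ with the stated weights on $[0,r]$. Using this, I first control the auxiliary quantities: $V$, $\widetilde V$, and the exponent $\frac{C_V}{\kappa}\int_0^r P(\tilde r)\tilde r/2\,d\tilde r$ in the $\Theta$-formula are all bounded (the condition $M_1'\log(\delta^2 P_\delta^{1-\alpha})^{-1}\leq 1$ handles the logarithmic growth of $V$), while the weight $e^{-W(r)+W(r_1)}$ is bounded by $1$ and produces a Gaussian-type decay once $P_\delta^{1-\alpha}r^2\gtrsim 1$, and $P(s)\lesssim P_\delta$. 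I then bound $F_U$ term by term from \eqref{def-FU}: the nonlinear pieces $(PU^2)'/P^\alpha$, $P^{1-\alpha}U^2/r$ and the $\widetilde V$-weighted integrand contribute at most $P_\delta^{1-\alpha}M_1^2 s$ times the $U$-weight, while the pressure term $(PR\Theta)'/P^\alpha$ contributes at most $P_\delta^{1-\alpha}M_2$ times that weight. A parallel estimation is carried out for $F_\Theta$. Plugging these into the integral representations for $U(r)$ and $\Theta(r)$ in \eqref{0925-1}, splitting the integration range at $r_1\sim P_\delta^{-1/2+\alpha/2}$ (respectively $r_1\sim P_\delta^{-1/2}$) and exploiting the Gaussian decay of the exponential weights, I recover four composite bounds of the form $\tfrac{1}{2}M_\bullet\times(\text{decay})$. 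Differentiation of the integral formulas handles $U'+\tfrac{d-1}{r}U$ and $\Theta'$ after a similar analysis of $F_U'$ and the corresponding quantity for $\Theta$.

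The main obstacle lies in matching the two distinct decay scales present in the problem: the $U$-weight is $(1+P_\delta^{1/2-\alpha/2}s)^{-2}$ while the $\Theta$-weight is $(1+\sqrt{P_\delta}\,s)^{-2}$, and the two differ by a factor $P_\delta^{\alpha/2}$ in their transitions. Coupling enters through the $(PR\Theta)'/P^\alpha$ contribution to $F_U$ and through the dissipation and convection terms in $F_\Theta$; the hierarchical smallness relations $M_2\ll M_1 P_\delta^{1/2+\alpha}$, $M_1 M_1'\ll P_\delta M_2$, $M_1^3\ll P_\delta^{1-2\alpha}\Theta_0$, $M_1 M_1'\ll \Theta_0 P_\delta^{1-\alpha}$ and $P_\delta^{1-\alpha}M_1'/A\leq 1$ are precisely what is needed for each weighted cross-term to lose a factor strictly larger than $2$ after integration. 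The remaining direct contributions are controlled by the smallness of $A/M_1$, $A/M_1'$, $\Theta_0/M_2$, and $A^2/(P_\delta M_2)$, closing the bootstrap with the improved bound $Z(r)\leq\tfrac{1}{2}$.
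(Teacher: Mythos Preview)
Your proposal is correct and follows essentially the same route as the paper: verify $Z(\delta)\le\tfrac12$ from the local estimates together with $A\ll M_1$, $\Theta_0\ll M_2$, $A^2\ll P_\delta M_2$; then, assuming $Z(r)\le 1$, bound $\widetilde V$, $V$, and $P$ (using the logarithmic condition on $M_1'$), estimate $F_U$, $F_U'$, $F_\Theta$ pointwise, and feed these into the integral formulas with the Gaussian-type kernels, splitting at the two scales $P_\delta^{-1/2+\alpha/2}$ and $P_\delta^{-1/2}$.

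One point deserves more care than your sketch suggests. The pressure contribution $\int_0^r e^{\alpha\widetilde V}(PR\Theta)'/P^\alpha\,dr_1$ does not directly yield the bound $P_\delta^{1-\alpha}M_2$ you state; a naive expansion of $(P\Theta)'$ brings in $P'/P$, which only decays like $M_1'/r$ and produces a divergent $\int dr_1/r_1$. The paper handles this by an integration by parts on $[\delta,r]$, which converts the integral into boundary terms of size $P_\delta^{1-\alpha}\Theta_0+P_\delta^{1-\alpha}M_2$ plus a remainder $\lesssim M_2(M_1+M_1')\log(P_\delta\delta^2)^{-1}$; the latter is what the logarithmic condition $M_1'\log(\delta^2 P_\delta^{1-\alpha})^{-1}\le 1$ is really for, and it yields $\lesssim M_2$ rather than $P_\delta^{1-\alpha}M_2$. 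This in turn forces the condition $M_2\ll M_1 P_\delta^{1/2+\alpha}$ when you estimate $|U'+\tfrac{d-1}{r}U|$ in the range $r\le P_\delta^{-1/2+\alpha/2}$. Your hierarchy already contains this condition, so the argument closes, but be aware that the pressure estimate is where it enters.
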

\begin{proof}
\underline{Starting point:} \\
To ensure $Z(\delta) \ll 1,$ we require, as in \cite{GI}:
\begin{align*}
A \ll M_1, \Theta_0 \ll M_2, A^2 \ll P_{\delta} M_2.
\end{align*}
\underline{Estimate on $\widetilde{V}:$} We note that for $r>\delta:$
\begin{align*}
\vert \widetilde{V}(r) \vert & \lesssim \int_0 ^\delta P_{\delta}^{1-\alpha}\Theta_0 r_1 ^{-1} \big( \frac{r_1}{\delta} \big)^{(1-\alpha - \varepsilon)dA} dr_1 + \int_{\delta}^{r} \frac{M_1'+M_1}{r_1 \big(1+ P_{\delta}^{1/2-\alpha/2} r_1 \big)^2} ds \\ & \lesssim \frac{P_{\delta}^{1-\alpha} \Theta_0}{A} + (M_1+M_1') \big[ \log \frac{1}{P_{\delta}^{1-\alpha} \delta^2} + 1 \big] \lesssim 1,
\end{align*}
provided $(M_1'+M_1) \log \frac{1}{\delta^2 P_{\delta}^{1-\alpha}} = O (1).$
\\
By a similar reasoning, we can show that under that same assumption, 
\begin{align*}
P_{\delta} \lesssim P(r) \lesssim P_{\delta},
\end{align*}
for $r > \delta.$ \\
\\
\underline{Estimates on $U$}: We begin with bounds on $F_U.$ \\ 
Using the fact that $P(r) \leqslant P_{\delta} \big( \frac{r}{A} \big)^{dA}$ for $r \in [0, \delta],$ we write
\begin{align*}
\int_0 ^{r} e^{\alpha \widetilde{V}(r_1)} \frac{(PR\Theta)'}{P^{\alpha}} dr_1 \lesssim \frac{P_{\delta}^{1-\alpha}}{A} M_1' M_2 + P_{\delta}^{1-\alpha} M_2 \delta^2 
+ \Bigg \vert \int_{\delta} ^r e^{\alpha \widetilde{V}(r_1)} \frac{(PR\Theta)'}{P^{\alpha}} dr_1 \Bigg \vert .
\end{align*} 
After integrating by parts, we obtain
\begin{align*}
\Bigg \vert \int_{\delta} ^r e^{\alpha \widetilde{V}(r_1)} \frac{(PR\Theta)'}{P^{\alpha}} dr_1 \Bigg \vert
\lesssim P_{\delta}^{1-\alpha} \Theta_0 + P_\delta^{1-\alpha} M_2 
+ M_2(M_1+M_1') \log \frac{1}{P_{\delta} \delta^2} 
 \lesssim P_\delta^{1-\alpha} \Theta_0 + M_2.
\end{align*}
We deduce from the above that
\begin{align} 
\label{F_U} \vert F_U(r) \vert & \lesssim A +  M_1 M_1' + P_{\delta}^{1-\alpha} \Theta_0  + M_2 \\
\label{F_U'}\vert F_{U}'(r) \vert & \lesssim\frac{ M_1' A +  M_1 M_1' +  P_{\delta}^{1-\alpha} \Theta_0  + M_2}{r}
\end{align}
Together with the fact that
\begin{align*}
r^{1-d} \int_0 ^r r_1^{d-1} e^{-C P_{\delta}^{1-\alpha} (r^2-r_1^2)} dr_1 \lesssim \frac{r}{\big(1+P_{\delta}^{1/2-\alpha/2}r \big)^2},
\end{align*}
\eqref{F_U} implies
\begin{align*}
\vert U(r) \vert \lesssim \frac{r}{\big(1+P_{\delta}^{1/2-\alpha/2}r \big)^2} 
\bigg(  A +  M_1 M_1' + P_{\delta}^{1-\alpha} \Theta_0 + M_2 
\bigg).
\end{align*}
Moving on to the derivative part, we first deal with $r \leqslant \frac{1}{P_{\delta}^{1/2-\alpha/2}}.$ We write that  
\begin{align*}
\bigg \vert U' + \frac{d-1}{r} U \bigg \vert & = \bigg \vert \frac{1}{2\mu_0+\lambda_0} F_U(r) - W'(r) U(r) \bigg \vert \\
                                           & \lesssim A + M_1 M_1' + P_{\delta} \Theta_0 + \frac{M_2}{P_{\delta}^{1/2+\alpha}} +  M_1.
\end{align*}
When $r \geqslant \frac{1}{P_{\delta}^{1/2-\alpha/2}},$ we write, using an integration by parts:
\begin{align} \label{IPPder}
\bigg \vert U' + \frac{d-1}{r} U \bigg \vert & = \Bigg \vert W'(r) \frac{r^{1-d}}{2\mu_0+\lambda_0} \int_0 ^r e^{-W(r)+W(s)} \partial_{s} \bigg( \frac{ s^{d-1} F_U(s)}{W'(s)} \bigg) ds \Bigg \vert.
\end{align}
We have by \eqref{F_U'} 
\begin{align*}
\Bigg \vert  \partial_{s} \bigg( \frac{ s^{d-1} F_U(s)}{W'(s)} \bigg) \Bigg \vert \lesssim r^{d-3} \frac{ A +  M_1 M_1' + P_{\delta}^{1-\alpha} \Theta_0 + M_2  }{P_{\delta}^{1-\alpha}} \bigg( 1 + \big(\frac{r}{\delta} \big)^{-4dA(1-\alpha)} \bigg) .
\end{align*}
Together with \eqref{IPPder} and the fact that 
\begin{align*}
r^{1-d} \int_0 ^r r_1 ^{d-3} e^{-C P_{\delta}^{1-\alpha}(r^2-r_1^2)} dr_1 \lesssim \frac{1}{P_{\delta}^{1-\alpha} r^3} ~ \textrm{when} ~ r \geqslant \frac{1}{P_{\delta}^{1/2-\alpha/2}},
\end{align*}
we conclude that 
\begin{align*}
\bigg \vert U' + \frac{d-1}{r} U \bigg \vert & \lesssim P_{\delta}^{1-\alpha} r \frac{1}{P_{\delta}^{1-\alpha}r^3} \frac{A +  M_1 M_1' + P_{\delta}^{1-\alpha} \Theta_0 + M_2}{P_{\delta}^{1-\alpha}} \\
                                             & \lesssim \frac{1}{P_{\delta}^{1-\alpha}r^2} \bigg( A +  M_1 M_1' + P_{\delta}^{1-\alpha} \Theta_0 + M_2 \bigg).
\end{align*}
\underline{Estimates on $\Theta$:} \\
We start with the following estimates
\begin{align}
\label{F-theta-s}\vert F_{\Theta}(r) \vert & \lesssim  M_1 M'_1 r  , ~\textrm{when}~  r \leqslant \frac{1}{\sqrt{P_{\delta}}},\\
\label{F-theta-l}\vert F_{\Theta}(r) \vert & \lesssim \frac{1}{r} \bigg( \frac{M_1^3}{P_{\delta}^{1-2\alpha}} +   \frac{M_1 M_1'}{P_{\delta}^{1-\alpha}} \bigg) , ~\textrm{when}~  r \geqslant \frac{1}{\sqrt{P_{\delta}}}.
\end{align}
We deduce from \eqref{F-theta-s} that
\begin{align*}
\vert \Theta(r) \vert \lesssim \Theta_0 + \frac{M_1 M'_1}{P_{\delta}}   .
\end{align*}
Now for the case of $r$ large, using that 
\begin{align*}
r^{2-d} \int_0 ^r r_1 ^{d-3} e^{-C P_{\delta}(r^2-r_1^2)} dr_1 \lesssim \frac{1}{(1+\sqrt{P_{\delta}}r)^2},
\end{align*}
we deduce from \eqref{F-theta-l} that
\begin{align*}
\vert \Theta(r) \vert \lesssim \Big( \frac{1}{\sqrt{P_{\delta}} r} \Big) ^2 \big( \Theta_0 +  \frac{M_1^3}{P_{\delta}^{1-2\alpha}} + \frac{M_1M_1'}{P_{\delta}^{1-\alpha}} \big).
\end{align*}
Similarly for the derivative, we use  
\begin{align*}
\Bigg \vert \partial_r \Bigg[ r^{2-d} \int_0 ^r r_1 ^{d-3} e^{-C P_{\delta}(r^2-r_1^2)} \Bigg] dr_1 \Bigg \vert \lesssim \frac{P_{\delta}r}{1+P_{\delta}r^2},
\end{align*}
and deduce 
\begin{align*}
\vert \Theta'(r) \vert & \lesssim P_{\delta}r \Bigg(\Theta_0 + \frac{M_1^3}{P_{\delta}^{1-2\alpha}} +\frac{M_1M_1'}{P_{\delta}^{1-\alpha}} \Bigg) , ~\textrm{when}~ r \leqslant \frac{1}{\sqrt{P_{\delta}}},   \\
\vert \Theta'(r) \vert & \lesssim \frac{1}{r} \Bigg( \Theta_0+\frac{M_1^3}{P_{\delta}^{1-2\alpha}} + \frac{M_1M_1'}{P_{\delta}^{1-\alpha}} \Bigg)  , ~\textrm{when}~ r \geqslant \frac{1}{\sqrt{P_{\delta}}}.
\end{align*}
\end{proof}
Global existence directly follows from the previous lemma by a standard continuation argument. 
\subsubsection{Asymptotic behavior}
Regarding the asymptotic behavior of $P, U$ and $\Theta,$ the arguments are almost identical to those of \cite{GI}. Therefore we only state the condition needed on $M_1,M_1',M_2$ to ensure positivity of $\Theta:$ 
$$
\frac{M_1^3}{P_{\delta}^{1-2\alpha}} + \frac{M_1 M_1'}{P_{\delta}^{1-\alpha}} \ll \Theta_0.
$$
\subsection{Existence when $\alpha = 1$}
In this section, we show an existence theorem in the case where $\alpha = 1.$ Unlike in the previous section, we now assume $2\mu_0+d\lambda_0 >0.$  \\
We show the following existence result:  
\begin{thm}\label{frwd>0}
Let $d \geqslant 3.$ Fix $(C_V, \kappa, R, \mu_0, \lambda_0) \in (0,\infty)^{5}$ such that $2\mu_0 + d \lambda_0 >0.$ \\
Then there exists a constant $C(C_V, \kappa, R, \mu_0, \lambda_0) := C>0$ such that if 
\begin{align*}
 A \log \delta ^{-1} + P_{\delta} + \delta <C ,
\end{align*}
there exists a solution $(P,U,\Theta) \in \mathcal{C}^{\frac{dA}{1/2-A}} \times \mathcal{C}^1 ([0,\infty)) \times  \mathcal{C}^1([0,\infty))$ to \eqref{ODEs:fwrd} such that $P(0)=0, P(\delta) = P_{\delta}, U(0)=0, U'(0)=A, \Theta(0)=\Theta_0:=\frac{1}{R}(2\mu_0+d\lambda_0)A, \Theta'(0)=0.$ \\
\\
Moreover the profiles satisfy the following global bounds:
\begin{align*}
P(r) & \simeq P_{\delta} \min \Bigg[1, \bigg( \frac{r}{\delta} \bigg)^{\frac{2dA}{1-2A}} \Bigg],\\
\vert U(r) \vert \lesssim \frac{Ar}{(1+r)^2}, & \ \ \ \ \  \vert U'(r) \vert \lesssim \frac{A}{(1+r)^2}, \\
0 \leqslant \Theta(r) \lesssim \frac{1}{(1+\sqrt{P_{\delta}}r)^2},& \ \ \ \ \ \vert \Theta'(r) \vert \lesssim \frac{\sqrt{P_{\delta}}r}{(1+\sqrt{P_{\delta}}r)^2}.
\end{align*}
Finally, there exist $P_{\infty} >0, U_{\infty}>0, \Theta_{\infty} >0$ such that as $r \to +\infty:$
\begin{align*}
P(r) &= P_{\infty} + O \big( \frac{1}{r^2}\big), \\
U(r) &= \frac{U_{\infty}}{r} +  O \big( \frac{1}{r^3}\big), \\
\Theta(r) &= \frac{\Theta_{\infty}}{r^2} + O \big( \frac{1}{r^4}\big).
\end{align*}
\end{thm}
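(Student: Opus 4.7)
The plan is to adapt the four-step strategy used for Theorem \ref{main-fwrd-0}: derive an integro-differential reformulation of \eqref{ODEs:fwrd} tailored to $\alpha=1$, solve it locally near $r=0$ by a fixed-point argument, extend the local solution globally via a bootstrap, and finally read off the asymptotic behavior as $r\to\infty$.

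For the reformulation, when $\alpha=1$ the algebraic manipulation used previously (division by $P^{\alpha}$ and integration against $e^{\alpha\widetilde V}$) is no longer available in the same form because now $2\mu_0+d\lambda_0>0$. Instead, I would exploit the identity $P(U'+\frac{d-1}{r}U)=(\frac{r}{2}-U)P'$ coming from the continuity equation, together with $\alpha=1$, to recognize the viscous part of the momentum equation as
\[
\bigl[(2\mu_0+\lambda_0)P\,(U'+\tfrac{d-1}{r}U)\bigr]'-2\mu_0(d-1)\,\frac{P'\,U}{r}.
\]
The momentum equation can then be integrated once (with an appropriate integrating factor analogous to $e^{W(r)}$) to express $U$ in terms of an integral of the nonlinear remainder and the pressure contribution. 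The $\Theta$-equation is handled exactly as in \cite{GI}, producing the third component of an integro-differential system analogous to \eqref{0925-1}.

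The constraint $\Theta_0=(2\mu_0+d\lambda_0)A/R$ is then forced by the solvability at $r=0$: inserting the Ansatz $U(r)\sim Ar$ and $\Theta(r)\sim\Theta_0$ into the momentum equation, the leading pressure term $R\Theta_0 P'$ must balance the leading viscous contributions $d(2\mu_0+\lambda_0)A\,P'-2\mu_0(d-1)A\,P'=(2\mu_0+d\lambda_0)A\,P'$, which yields exactly the stated relation. This is why $\Theta_0$ is no longer free and lies instead on a one-parameter family indexed by $A$. For the local existence I would work in a Banach space of pairs $(U,\Theta)$ equipped with a norm modelled on $\|\cdot\|^{\delta}$, controlling $U-Ar$ and $\Theta-\Theta_0$ together with their first derivatives with weights reflecting the expected behaviour near the origin, and show by the standard contraction-mapping argument that the analogue of $\Phi$ maps a small ball to itself and is contractive there.

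The main difficulty, and the source of the unusual $A\log\delta^{-1}$ factor in the smallness hypothesis, is that with $\alpha=1$ there is no longer a stabilising $P^{1-\alpha}$ prefactor available to absorb the singular behavior near $r=0$: the integrals analogous to those defining $V$, $\widetilde V$ and $W$ now generate logarithms in $\delta$, so one must trade the algebraic smallness $P_\delta^{1-\alpha}/A$ used in the previous subsection for a logarithmic smallness $A\log\delta^{-1}$. Once the local solution is obtained, global existence follows from a bootstrap of the same shape as the one above, with a control quantity $Z(r)$ enforcing the targeted decay $|U|\lesssim Ar/(1+r)^2$ and $\Theta\lesssim 1/(1+\sqrt{P_\delta}\,r)^2$; and the asymptotic behavior at infinity follows from arguments essentially identical to those in \cite{GI}, the positivity of $\Theta$ at infinity being enforced by a condition analogous to $M_1^3/P_\delta^{1-2\alpha}+M_1M_1'/P_\delta^{1-\alpha}\ll\Theta_0$ but specialised to $\alpha=1$.
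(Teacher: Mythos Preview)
Your proposal follows essentially the same four-step strategy as the paper, and your identification of the viscous term as $[(2\mu_0+\lambda_0)P(U'+\tfrac{d-1}{r}U)]'-2\mu_0(d-1)P'U/r$ and of the constraint $\Theta_0=(2\mu_0+d\lambda_0)A/R$ is correct and matches the paper's derivation.

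One point where your heuristic diverges from the paper's actual mechanism: in the $\alpha=1$ derivation the paper integrates the momentum equation once, divides by $P$, and then uses the \emph{density-independent} integrating factor $W(r)=r^2/\bigl(4(2\mu_0+\lambda_0)\bigr)$; the auxiliary function $\widetilde V$ is not used at all. Consequently the $A\log\delta^{-1}$ smallness does not arise from logarithms in $V,\widetilde V,W$, as you suggest, but from the new term
\[
2\mu_0\,P^{-1}\!\int_0^r P'\,\frac{d-1}{r_1}\,U\,dr_1
\]
that now sits inside $F_U$. Differentiating this term forces one to control the oscillation $\bigl|U(r_1)/r_1-U(r)/r\bigr|$; the paper handles this by building the local norm around $r^{1-\varepsilon}\bigl|(U/r)'\bigr|$ with $A\ll\varepsilon\ll1$ and $\delta^{\varepsilon}/\varepsilon\ll1$, which is precisely what encodes the condition $A\log\delta^{-1}\ll1$. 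Your intuition that the loss of the $P^{1-\alpha}$ prefactor is the culprit is correct, but the logarithmic difficulty is located in $F_U$ rather than in the integrating factors.
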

\begin{rem}
Note that compared to Theorem \ref{frwd-0}, the choice of $\Theta_0$ is not free.
\end{rem}
\begin{proof}
We proof is similar to that of Theorem \ref{frwd-0} above. Therefore we only outline the parts of the argument that differ, namely the derivation of the integro-differential equation, and the proof of local existence. \\
\\
\textbf{Integro-differential formulation:} \\
We consider the following initial conditions:
\begin{align*}
P(0)=0, ~~ U(0)=0, ~~ U'(0)= A, ~~ \Theta(0) = \Theta_0:= \frac{2\mu_0+d\lambda_0}{R}A, ~~ \Theta'(0)=0.
\end{align*}
First, we integrate the equation on $P,$ and obtain for some $\delta>0:$
\begin{align}\label{expP}
P(r) = e^{V(r)-V(\delta)} P(\delta),
\end{align}
where
\begin{align*}
V(r) - V(\delta) = \int_{\delta} ^r \frac{U' + \frac{d-1}{r_1}U}{\frac{1}{2}r_1-U} dr_1.
\end{align*}
The equation on $U$ reads:
\begin{align*}
& -\frac{1}{2} (r PU)' + (PU^2)' + \frac{d-1}{r} PU^2 - (2\mu_0+\lambda_0)P \bigg(U' + \frac{d-1}{r}U   \bigg)' + (PR\Theta)'  \\
&= (2\mu_0+\lambda_0) P'U' + \lambda_0  P' \frac{d-1}{r}U.
\end{align*}
We integrate the previous expression, perform an integration by parts and divide by $P$ to obtain:
\begin{align*}
&-\frac{1}{2}rU +  U^2 + P^{-1} \int_0 ^r \frac{d-1}{r_1} PU^2 dr_1 - (2\mu_0+\lambda_0) \big(U' + \frac{d-1}{r} U \big) + R \Theta \\
&=-2\mu_0 P^{-1} \int_0 ^r P' \frac{d-1}{r_1} U dr_1.
\end{align*}
The necessity of the relation between $A$ and $\Theta_0$ can be seen in this equation. Since we expect the behavior $U(r) \simeq A r$ near 0, equating the higher order terms on each side yields
\[
- (2\mu_0+\lambda_0) \big(U' + \frac{d-1}{r} U \big) + R \Theta 
+2\mu_0 P^{-1} \int_0 ^r P' \frac{d-1}{r_1} U dr_1
\simeq -(2\mu _0 + d\lambda_0) A + R \Theta_0 .
\]
This directly gives
$-(2\mu _0 + d\lambda_0) A + R \Theta_0=0$. \\
Now we multiply by $r^{d-1} e^{W(r)}$ (where $W(r)= \frac{r^2}{4(2\mu_0+\lambda_0)}$) and obtain
\begin{align*}
-(2\mu_0+\lambda_0) \bigg(r^{d-1} e^{W(r)} U \bigg)' &=- r^{d-1} e^{W(r)} \bigg(R \Theta + U^2  + P^{-1} \int_0 ^r \frac{d-1}{r_1} PU^2 dr_1\\
&+2\mu_0 P^{-1} \int_0 ^r P' \frac{d-1}{r_1}U dr_1
\bigg).
\end{align*}
We conclude with the integro-differential satisfied by $U:$
\begin{align*}
U(r) &= \frac{r^{1-d}}{2\mu_0+\lambda_0} \int_0 ^r e^{W(r_1)-W(r)} r_1^{d-1} F_U(r_1) dr_1,
\end{align*}
where
\begin{align*}
F_U(r) &=  R \Theta + U^2 + P^{-1} \int_0 ^r \frac{d-1}{r_1} PU^2 dr_1+ 2\mu_0 P^{-1} \int_0 ^r P' \frac{d-1}{r_1}U dr_1. 
\end{align*}
For $\Theta,$ we do not detail the argument since it is similar to the above. We get
\begin{align*}
\Theta(r) = (d-2) r^{2-d} \int_0 ^r r_1 ^{d-3} e^{-Z(r)+Z(r_1)} dr_1 \Theta_0 + \frac{r^{2-d}}{\kappa} \int_0 ^r r_1 ^{d-2} e^{Z(r_1)-Z(r)} F_{\Theta}(r_1) dr_1 - \frac{U^2}{2 C_V} 
,
\end{align*}
where
\begin{align*}
Z(r) & := \frac{C_V}{2\kappa} \int_0 ^r \tilde{r} P(\tilde{r}) ~d\tilde{r} , \\
F_{\Theta}(r) &:=  UP \bigg(\frac{U^2}{2} + C_V \Theta \bigg) + UPR \Theta + \frac{d-2}{r} \int_0 ^r \bigg(UP \bigg( \frac{U^2}{2} + C_V \Theta \bigg) + UPR\Theta \bigg) dr_2 \\
&+ \big(\frac{\kappa}{C_V}-(2\mu_0+\lambda_0)\big) \frac{(U^2)'}{2} + \frac{\kappa}{C_V} \frac{d-2}{2r} U^2\\
 &+\frac{2\mu_0+\lambda_0}{r} \int_0 ^r \bigg((2-d) r_1 ^{-d+1} P + P r_1^{-d+2} \frac{U' + \frac{d-1}{r}U}{\frac{1}{2}r - U} \bigg)(r_1^{d-1} UU') dr_1 \\
&-\lambda \frac{d-1}{r} U^2 +\lambda_0 \frac{d-1}{r} \int_0 ^r \big(r_1^{-d+2} P \big)'(r_1^{d-2}U^2)dr_1 \\
&+\frac{(2\mu_0+\lambda_0)}{r} \int_0 ^r r_1 P' UU' dr_1 +\frac{(d-1)\lambda_0}{r} \int_0 ^r P' U^2 dr_1
\end{align*}
\textbf{Local existence:} \\
Fix $\varepsilon>0$ such that $A \ll \varepsilon \ll 1.$ Assume that $\delta$ satisfies $\delta^{\varepsilon}/\varepsilon \ll 1.$ We define the following norm (slightly different from the above): 
\begin{align*}
\Vert (U,\Theta) \Vert^{\delta} := \sup_{0<r<\delta} \bigg[ r^{-1} \vert U(r) \vert + 
r^{1-\varepsilon} \big \vert \big( \frac{U}{r} \big)' \big \vert + \Theta(r) + r^{-1} \vert \Theta'(r) \vert \bigg].
\end{align*}
We apply the fixed point argument with the above norm in the space 
\[
B_{E^\delta}( (Ar, \Theta_0) , A/2) \cap 
\Big\{ \lim_{r\to 0} \frac{U(r)}{r} = A , 
       \lim_{r\to 0} \Theta(r) = \Theta_ 0
 \Big\} .
\]
\\
\noindent \underline{Stabilization:} We have the following estimates:
\begin{align*}
\vert F_U(r) - d(2\mu_0+\lambda_0)A \vert & \lesssim A \bigg( \delta^2 
+ \frac{\delta^{\varepsilon}}{\varepsilon} \bigg), \\ 
\vert F_{\Theta}(r) \vert \lesssim (A^2 + P_{\delta}A) r ,
\end{align*}
where we used that $\Theta_0 = \frac{1}{R}(2\mu_0 + d\lambda_0)A.$ \\
Denote $(\widehat{U},\widehat{\Theta})=\Phi \big((Ar,\Theta_0) \big)$. \\
We can deduce from the above that
\begin{align*}
\vert \widehat{U}-Ar \vert & \lesssim A r \bigg(  \delta^2 + \frac{\delta^{\varepsilon}}{\varepsilon} \bigg) , \\
\vert \widehat{\Theta} - \Theta_0 \vert & \lesssim (A^2 + P_{\delta} A)r^2, \\
\vert \widehat{\Theta}' \vert &  \lesssim (A^2 + P_{\delta} A)r .
\end{align*}
For the second $U-$part of the norm we can write, using that $P'>0$ on $(0,\delta)$:
\begin{align*}
\vert F_U'(r) \vert &  \lesssim \vert \Theta' \vert + \vert UU' \vert + \bigg \vert \bigg( P^{-1} \int_0 ^r P \frac{U^2}{r_1} dr_1 \bigg)' \bigg \vert + \frac{A}{r} P^{-1} \int_0 ^r P' \bigg \vert \frac{(d-1)U(r_1)}{r_1} - \frac{(d-1)U(r)}{r} \bigg \vert dr_1 \\
                    & \lesssim Ar +  \frac{A^2}{\varepsilon} r^{\varepsilon-1}.
\end{align*}
We deduce the following estimate as we did in Section \ref{frwd-0}:
\begin{align*}
\big \vert \big( \frac{\widehat{U}}{r} \big)' \big \vert \lesssim A r^{-1+\varepsilon} \big( \delta^{2-\varepsilon} +  \frac{A}{\varepsilon} \big).
\end{align*}
\underline{Contraction:}
Let $(U_i,\Theta_i) \in B_{E^{\delta}}\big((Ar,\Theta_0), A/2 \big).$ Denote $D:= \Vert (U_1,\Theta_1) - (U_2,\Theta_2) \Vert^{\delta}.$ \\
Finally, let $(\widetilde{U_i}, \widetilde{\Theta_i}):=\Phi\big( (U_i,\Theta_i) \big).$ \\
We have, arguing as in the stabilization part of the proof:
\begin{align*}
\vert \widetilde{U_1}(r) - \widetilde{U_2}(r) \vert & \lesssim D r \bigg(  \delta^2 + \frac{\delta^{\varepsilon}}{\varepsilon} \bigg), \\
\big \vert \big( \frac{\widetilde{U_1}}{r} \big)' - \big( \frac{\widetilde{U_2}}{r} \big)' \big \vert & \lesssim D r^{-1+\varepsilon} \big( \delta^{2-\varepsilon} +  \frac{A^2}{\varepsilon} \big), \\
\vert \Theta_1(r) - \Theta_2(r) \vert & \lesssim  A D r^2 ,\\ 
\vert \Theta_1 '(r) - \Theta_2 ' (r) \vert & \lesssim A D r .
\end{align*}
The proofs of global existence and asymptotic behavior of the solutions are done similarly as in Section \ref{frwd-0}, therefore details are omitted.
\end{proof}

\section{Non-existence of shrinkers} \label{backwrd}
We now investigate shrinker-type solutions to the system \eqref{eq:cNS}, that is solutions of the type \eqref{shrinkers}. \\
Plugging the corresponding ansatz into the system, we obtain the following ODEs satisfied by the profiles $P,U$ and $\Theta$: \\ 
For $r = |x| > 0$ 
\begin{equation}\label{eq:ODEs_back}
\begin{cases}
\displaystyle 
\frac{1}{2} r P ' + P' U 
+ P \Big( U' + \frac{d-1}{r} U \Big) 
= 0 , 
\\[5mm]
\begin{split}
\displaystyle 
\frac{1}{2} 
 P  U 
& 
+ \frac{1}{2} r (P U)' 
+ (P U ^2) ' + \frac{d-1}{r} P U^2 
+(P R \Theta)'
\\
= 
& 
(2\mu_0 + \lambda_0) P^{\alpha} 
 \Big( U'' + \frac{d-1}{r} U' - \frac{d-1}{r^2} U \Big) 
 \\&- (2 \mu_0 + \lambda_0)\alpha \frac{U' +  \frac{d-1}{r} U}{\frac{1}{2}r + U} P^{\alpha} U' - \lambda_0 \alpha \frac{U' + \frac{d-1}{r} U}{\frac{1}{2}r + U} P^{\alpha} \frac{d-1}{r} U ,
\end{split}
\\[10mm]
\displaystyle 
\begin{split}
P 
 \Big( \frac{U^2}{2}
&  + C_V \Theta \Big) 
+ \frac{1}{2} r 
  \Big( P \Big( \frac{U^2}{2} + C_V \Theta \Big) \Big) '
+ \Big( U P \Big( \frac{U^2}{2} + C_V \Theta \Big) 
        + U P R \Theta \Big) '
\\
&
+ \frac{d-1}{r} 
  \Big( U P \Big( \frac{U^2}{2} + C_V \Theta \Big)
       + U P R \Theta
  \Big)
- \kappa \Big( \Theta '' + \frac{d-1}{r} \Theta ' \Big)
\\
= 
& 
2\mu_0 P^{\alpha} \Big( (U')^2 + \frac{d-1}{r^2} U^2 \Big) 
+ \lambda_0 P^{\alpha} \Big( U' + \frac{d-1}{r} U \Big) ^2
\\
& 
+ (2\mu_0 + \lambda_0) P^{\alpha}
 \Big( U'' + \frac{d-1}{r}U' - \frac{d-1}{r^2} U \Big) U
\\
&
- (2 \mu_0 + \lambda_0)\alpha \frac{U' +  \frac{d-1}{r} U}{\frac{1}{2}r + U} P^{\alpha} U'U - \lambda_0 \alpha\frac{U' +  \frac{d-1}{r} U}{\frac{1}{2}r + U}P^{\alpha} \frac{d-1}{r} U^2.  
\end{split}
\end{cases}
\end{equation}
\noindent The main result of this section is that \eqref{eq:ODEs_back} does not have non trivial small solutions. 
\\
As mentioned above, given the dependence laws that we have adopted for the Lam\'{e} coefficients, it is natural to restrict to solutions that exhibit cavitation. More precisely, we will make the following mild assumptions on the density profile $P$ throughout this section:
\begin{align} \label{cavitation}
\bullet ~~ & \exists \varepsilon, P_{\varepsilon} >0, \forall r \geqslant \varepsilon, P(r) \geqslant P_{\varepsilon}, \\
\notag \bullet~~ & \exists \Lambda >0,~ \forall r \in (0,\varepsilon),~ \frac{r \int_0 ^r P^{1-\alpha}(r_1) dr_1}{\int_0 ^r P^{1-\alpha}(r_1) r_1 dr_1} , \frac{\int_0 ^r P}{P^{\alpha} \int_0^r P^{1-\alpha}} \leqslant \Lambda.
\end{align}
Note that the functions $P$ constructed in Section \ref{fwrd} satisfy this condition. Since the forward and backward systems are essentially equivalent near the origin, a shrinker exhibiting cavitation would be expected to behave similarly near 0.
\\
The main theorem of this section is:
\begin{thm}\label{main}
Let $d \geqslant 3$ and $0<\alpha<1.$ Fix $(C_V, \kappa, R, \mu_0, \lambda_0) \in (0,\infty)^5$ such that $C_V \leqslant \frac{\kappa}{2\mu_0+\lambda_0}.$ \\
Assume that the density function $P$ satisfies the above condition \eqref{cavitation}. \\
Then there exists a constant $C(C_V,\kappa,R, \mu_0,\lambda_0,\varepsilon, P_{\varepsilon},\Lambda):=C>0$ such that if
\begin{align*}
\sup_{r>0} \bigg( \langle r \rangle^2 \Theta + P^{1-\alpha} + \bigg \vert \frac{U}{r \Theta} \bigg \vert \bigg)  + \sup_{r>\varepsilon} \bigg \vert \frac{U'}{r \Theta'} \bigg \vert < C , \\
\end{align*}
then $U \equiv \Theta \equiv 0, P = Constant.$

\end{thm}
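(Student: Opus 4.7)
My plan is to reduce the proof to showing $\Theta\equiv 0$. Indeed, the first equation of \eqref{eq:ODEs_back} rewrites as $P'(\tfrac12 r+U)=-P(U'+(d-1)U/r)$, so $U\equiv 0$ forces $P$ constant; meanwhile the smallness hypothesis $\sup_{r>0}|U/(r\Theta)|<C$ yields the pointwise bound $|U|\leq Cr|\Theta|$, so $\Theta\equiv 0$ in turn forces $U\equiv 0$. Thus the entire argument reduces to proving the vanishing of $\Theta$.

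To this end I would multiply the third equation of \eqref{eq:ODEs_back} by $\Theta\, r^{d-1}$ and integrate on $(0,\infty)$. The heat term $-\kappa(\Theta''+\tfrac{d-1}{r}\Theta')=-\kappa r^{1-d}(r^{d-1}\Theta')'$ contributes, after integration by parts, the positive quantity $\kappa\int_0^\infty r^{d-1}|\Theta'|^2\,dr$. The linear pieces $P(\tfrac{U^2}{2}+C_V\Theta)+\tfrac12 r[P(\tfrac{U^2}{2}+C_V\Theta)]'$, after a further integration by parts, produce a coercive $L^2$-type contribution in $\Theta$ weighted by $P$; combined with the heat dissipation and Hardy's inequality in dimension $d\geq 3$, this defines a weighted $H^1$-type norm $\|\Theta\|_\ast^2$ that I aim to make the coercive quantity of the estimate. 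All remaining terms -- the convective pieces involving $U$ and the viscous productions $P^\alpha(U')^2$, $P^\alpha U^2/r^2$, and the cross-terms -- will be absorbed into $C'\|\Theta\|_\ast^2$ with $C'$ arbitrarily small, using $|U|\leq Cr|\Theta|$, $|U'|\leq Cr|\Theta'|$ on $(\varepsilon,\infty)$, and $\sup P^{1-\alpha}<C$. The assumption $C_V\leq \kappa/(2\mu_0+\lambda_0)$ is presumably needed at this stage to secure positivity of a particular combination of the heat and viscous contributions that might otherwise compete.

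The main obstacle will be the subinterval $(0,\varepsilon)$, where the pointwise bound on $U'/(r\Theta')$ is not available and the viscosity $P^\alpha$ degenerates at the vacuum. It is there that the cavitation assumption \eqref{cavitation} plays the essential role: the ratio $r\int_0^r P^{1-\alpha}\,dr_1/\int_0^r P^{1-\alpha}r_1\,dr_1\leq \Lambda$ functions as a substitute for a Hardy-type inequality near the origin, while $\int_0^r P/(P^\alpha\int_0^r P^{1-\alpha})\leq\Lambda$ controls the pressure contribution $(PR\Theta)'/P^\alpha$. I anticipate that one integrates the momentum equation, after division by $P^\alpha$, on $(0,\varepsilon)$ to express $U$ there as a weighted integral of $\Theta$ and $\Theta'$, thereby trading the missing pointwise bound on $U'$ for an integrated smallness. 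Boundary terms at $r=0$ vanish by the same cavitation conditions, and at $r=\infty$ by the decay $\Theta\lesssim\langle r\rangle^{-2}$; putting everything together yields $\|\Theta\|_\ast^2\leq C'\|\Theta\|_\ast^2$ with $C'$ small, whence $\Theta\equiv 0$, and the reduction in the first paragraph concludes the proof.
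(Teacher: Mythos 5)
Your proposal takes a genuinely different route from the paper: you propose a scalar energy estimate on the $\Theta$ equation alone, while the paper performs a \emph{coupled} vector-valued weighted energy estimate on the pair $(\Theta,U)$. Concretely, the paper writes the momentum and energy equations as a $2\times 2$ system
\[
\widetilde A(r)\,\frac{d}{dr}\binom{\Theta}{U} - B\Big(\frac{d-1}{r}\frac{d}{dr}-\frac{d^2}{dr^2}\Big)\binom{\Theta}{U} = \text{RHS},
\]
multiplies by $B^{-1}$ and by the matrix integrating factor $\exp\big(-\int_0^r B^{-1}\widetilde A\big)$, pairs with $(\Theta,U)^T$, diagonalizes the exponent (Lemma \ref{diag-A}), and then shows the diagonal part dominates the off-diagonal corrections (Lemmas \ref{asymp}, \ref{asymp-small}, \ref{LHS}, \ref{RHS}). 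The condition $C_V\leq \kappa/(2\mu_0+\lambda_0)$ enters precisely to order the eigenvalues of $A$ (Remark \ref{notation}: it guarantees $\delta<\alpha$ in the matrix's entries), which in turn dictates that the \emph{slow} exponential pairs with $\Theta'$ and the \emph{fast} one with $U'$, and hence why the smallness hypotheses take the form $\sup|U/(r\Theta)|,\sup|U'/(r\Theta')|$ rather than the reverse.

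There are concrete gaps in your plan. First, integrability at infinity: with only the weight $r^{d-1}$ and the assumed decay $\Theta\lesssim \langle r\rangle^{-2}$, $\Theta'\lesssim \langle r\rangle^{-3}$, the quantities $\int r^{d-1}\Theta^2\,dr$ and $\int r^{d-1}(\Theta')^2\,dr$ diverge for $d\geq 4$ and $d\geq 6$ respectively; the paper's integrating factor $\exp(A(r))$ has entries behaving like $e^{-cr^2}$ for large $r$ and is what makes all the integrals finite. Without it, your $\|\Theta\|_\ast$ is not a priori finite. Second, the sign of your claimed coercive term: after multiplying $PC_V\Theta + \tfrac12 r(PC_V\Theta)'$ by $\Theta r^{d-1}$ and integrating by parts, one obtains $(1-\tfrac{d}{4})\int r^{d-1}PC_V\Theta^2 + \tfrac14\int r^{d}P'C_V\Theta^2$, which has the wrong sign when $d\geq 4$ and an indefinite $P'$ contribution; it is the Gaussian-type weight in the paper that restores positivity of the $\int e^{-\lambda_{\min}}P\Theta^2$ term via $(e^{\alpha(r)})'$. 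Third, and most structurally, the viscous heating $P^\alpha(U')^2$ (positive on the right of the $\Theta$ equation) must be absorbed; on $(0,\varepsilon)$ you have no pointwise bound on $U'$ in terms of $\Theta'$, and your substitute, namely integrating the momentum equation to express $U$ through $\Theta$, reintroduces $(PR\Theta)'/P^\alpha$ and a feedback loop: the quantity you are bounding ($\Theta$) now appears on both sides at the same order, and it is not clear that closes as a scalar inequality. The paper circumvents this entirely by treating the coupling $PR\Theta\,U' \leftrightarrow PR\,\Theta'$ symmetrically inside the matrix $\widetilde A$, so that after pairing with $(\Theta,U)^T$ these cross-terms appear as controllable off-diagonal corrections rather than as unabsorbed sources.
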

\begin{rem}
The assumption $d \geqslant 3$ comes from the fact that we use Hardy's inequality in the proof.
\end{rem}
\begin{rem}
Note that by scaling we expect the behavior $U(r) \sim \displaystyle \frac{U_{\infty}}{r}$ and $\Theta \sim \displaystyle \frac{\Theta_{\infty}}{r^2}$ at infinity. This makes the assumptions in the theorem critical.
\end{rem}
\begin{rem}
The proof can be adapted with minor changes when $\alpha = 1,$ or when the Lam\'{e} coefficients are not density dependent. For example this method could be applied in the case where $C_V \geqslant \frac{\kappa}{2\mu + \lambda}.$ In this case the smallness condition would be written in terms of $\sup_{r>\varepsilon} \big \vert \frac{\Theta}{r U} \big \vert $ and $\sup_{r>\varepsilon} \big \vert \frac{\Theta'}{r U'} \big \vert.$ 
\end{rem}
\begin{rem}\label{beta}
Note that our smallness assumptions imply that there exists $0 < b \ll 1 $ such that
\begin{align*}
\sup_{r>0} \bigg \vert \frac{U}{r} \bigg \vert \leqslant b.
\end{align*}
\end{rem}
\begin{rem}\label{C_V}
We notice that for any $r >0,$ $C_V \leqslant \frac{1}{2} \frac{\kappa}{2\mu_0 + \lambda_0} \frac{1}{P^{\alpha}},$
given the smallness assumption on $P.$
\end{rem}
\noindent The remainder of this section is dedicated to the proof of Theorem \ref{main}.

\subsection{Set-up}
Let us recall $\mu = \mu_0 P ^\alpha ,\lambda = \lambda_0 P^\alpha $. We start by writing the equations on $\Theta$ and $U$ from \eqref{eq:ODEs_back} in $2 \times 2$ matrix form: 
\begin{eqnarray*}&& \left( \begin{array}{ll}
C_V P \big(\frac{r}{2} + U \big) \displaystyle \frac{d}{dr}   &    P R \Theta \displaystyle \frac{d}{dr}  \\
               P R  \displaystyle  \frac{d}{dr}              &   P \big(\displaystyle\frac{r}{2} + U \big) \displaystyle \frac{d}{dr} 
\end{array} \right) 
\left( \begin{array}{ll}
\displaystyle \Theta \\
\displaystyle U
\end{array} \right) \\
&-& \left( \begin{array}{cc}
\kappa \big(\displaystyle \frac{d-1}{r} \frac{d}{dr} -\displaystyle \frac{d^2}{dr^2} \big)   &    0  \\
0                    &    (2\mu+\lambda) \big( \displaystyle \frac{d-1}{r} \frac{d}{dr} -\displaystyle \frac{d^2}{dr^2}  \big)
\end{array}
\right) \left( \begin{array}{ll}
\Theta \\
U
\end{array} \right) \\
&=& \left( \begin{array}{ll}
-\displaystyle P(r) \Theta (r) \big( C_V + R \frac{d-1}{r}U \big) + 2 \mu \bigg( (U')^2 + \big(\displaystyle \frac{d-1}{r} U \big)^2 \bigg) + \lambda \bigg(U' + \frac{d-1}{r} U \bigg)^2 
\\
-\displaystyle \frac{1}{2} P(r) U(r) - (2\mu+\lambda)\displaystyle \frac{d-1}{r^2} U
\end{array}
\right)
\\
&+& 
\left( \begin{array}{ll}
 - (2 \mu + \lambda)\alpha \displaystyle \frac{ U' +  \frac{d-1}{r} U}{\frac{1}{2}r + U} U'U - \lambda \alpha \displaystyle \frac{U' +  \frac{d-1}{r} U}{\frac{1}{2}r + U}\displaystyle\frac{d-1}{r} U^2
\\
- (2 \mu + \lambda)\alpha \displaystyle \frac{U' +  \frac{d-1}{r} U}{\frac{1}{2}r + U} U' - \lambda \alpha \displaystyle \frac{U' +  \frac{d-1}{r} U}{\frac{1}{2}r + U} \displaystyle \frac{d-1}{r} U
\end{array}
\right).
\end{eqnarray*}
We denote 
\begin{eqnarray*}
B &=& \left( \begin{array}{cc}
\kappa  &    0   \\
0       &   2\mu + \lambda
\end{array}
\right), \\
\widetilde{A}(r) &=&  \left( \begin{array}{ll}
C_V P \big(\frac{r}{2} + U \big)   &    P R \Theta  \\
               P R                 &   P \big(\frac{r}{2} + U \big) 
\end{array}
\right).
\end{eqnarray*}
The previous equation can then be written
\begin{eqnarray*}
&&\widetilde{A}(r)  \frac{d}{dr}\left( \begin{array}{ll}
\Theta \\
U
\end{array} \right) - B \big(\displaystyle \frac{d-1}{r} -\displaystyle \frac{d^2}{dr^2} \big) 
\left( \begin{array}{ll}
\Theta \\
U
\end{array} \right) \\
&=& \left( \begin{array}{ll}
- P(r) \Theta (r) \big( C_V + R \frac{d-1}{r}U \big) + 2 \mu \bigg( (U')^2 + \big( \displaystyle \frac{d-1}{r} U \big)^2 \bigg) + \lambda \bigg(U' +\displaystyle \frac{d-1}{r} U \bigg)^2
\\
-\displaystyle \frac{1}{2} P(r) U(r) - (2\mu+\lambda)\displaystyle \frac{d-1}{r^2} U
\end{array}
\right) \\
\\
&+& 
\left( \begin{array}{ll}
 - (2 \mu + \lambda)\alpha \displaystyle \frac{U' +  \frac{d-1}{r} U}{\frac{1}{2}r + U} U'U - \lambda \alpha\displaystyle \frac{U' +  \frac{d-1}{r} U}{\frac{1}{2}r + U} \displaystyle \frac{d-1}{r} U^2
\\
- (2 \mu + \lambda)\alpha \displaystyle \frac{U' +  \frac{d-1}{r} U}{\frac{1}{2}r + U} U' - \lambda \alpha \displaystyle \frac{U' + \frac{d-1}{r} U}{\frac{1}{2}r + U} \displaystyle \frac{d-1}{r} U
\end{array}
\right).
\end{eqnarray*}
Hence after multiplication by $B^{-1}$ and $ \exp \big(- \int_0 ^r B^{-1} \widetilde{A}(r') dr \big)$, we obtain:
\begin{eqnarray}\label{eq}
&& - \nabla \cdot \bigg( \exp \big(- \int_0 ^r B^{-1} \widetilde{A}(r') dr \big) \nabla \left( 
\begin{array}{ll}
\Theta \\
U
\end{array}
\right) \bigg) \\
\notag &=& \exp \big(- \int_0 ^r B^{-1} \widetilde{A}(r') dr \big) \\
\notag & \times & \left( \begin{array}{ll}
-\displaystyle \frac{P(r) \Theta (r)}{\kappa} \big(C_V + R \frac{d-1}{r}U \big) + 2 \displaystyle \frac{\mu}{\kappa} \bigg( (U')^2 + \big(\displaystyle \frac{d-1}{r} U \big)^2 \bigg) + \displaystyle \frac{\lambda}{\kappa} \bigg(U' + \displaystyle \frac{d-1}{r} U \bigg)^2
\\
\notag- \displaystyle \frac{1}{2(2\mu+\lambda)} P(r) U(r) - \displaystyle \frac{d-1}{r^2} U
\end{array}
\right)
\\
\notag &+&  \exp \big(- \int_0 ^r B^{-1} \widetilde{A}(r') dr \big) \\
\notag & \times & \left( \begin{array}{ll}
 -\displaystyle \frac{(2 \mu + \lambda)\alpha}{\kappa} \displaystyle \frac{U' + \frac{d-1}{r} U}{\frac{1}{2}r + U} U'U -\displaystyle \frac{\lambda}{\kappa} \alpha \displaystyle \frac{U' +  \frac{d-1}{r} U}{\frac{1}{2}r + U} \displaystyle \frac{d-1}{r} U^2
\\
-\alpha \displaystyle \frac{U' +  \frac{d-1}{r} U}{\frac{1}{2}r + U} U' -\displaystyle \frac{\lambda}{2\mu+\lambda} \alpha \displaystyle \frac{U' + \frac{d-1}{r} U}{\frac{1}{2}r + U} \displaystyle \frac{d-1}{r} U
\end{array}
\right).
\end{eqnarray}
Denote from now on 
\begin{eqnarray*}
\exp(A(r)) &:=&  \exp \big(- \int_0 ^r B^{-1} \widetilde{A}(r') dr \big) \\
           & := & \exp \left( \begin{array}{cc}
\alpha & \beta  \\
\gamma & \delta
\end{array} \right).
\end{eqnarray*}

\subsection{Basic properties of the matrix $A$}
In this section we collect some elementary facts about the matrix $A.$ \\
First note that we have explicit expressions for its coefficients:
\begin{align*}
\alpha &=  - \frac{C_V}{\kappa} \int_0 ^r P(r') \big(\frac{r'}{2} + U \big) dr',  \\
\beta &= -\frac{R}{\kappa} \int_0 ^r P \Theta dr' ,  \\
\gamma &= -R \int_0 ^r \frac{P}{2 \mu + \lambda} dr' , \\
\delta &= - \int_0 ^r \frac{P(r')}{2\mu+\lambda} \big(\frac{r'}{2} + U \big) dr'.
\end{align*}
\begin{rem}\label{notation}
Given the assumption $C_V \leqslant \frac{\kappa}{2\mu + \lambda},$ we have $\delta < \alpha.$ 
\end{rem}
The next lemma shows that the matrix $A$ can be diagonalized.
\begin{lem}\label{diag-A}
We have the following decomposition for $A:$ \\
There exists a real-valued $2 \times 2$ matrix $Q,$ with diagonal elements equal to one, such that:
\begin{eqnarray*}
A(r) = Q^{-1}(r)\left(
\begin{array}{cc}
- \lambda_{min} &  0 \\
0   &  -\lambda_{max}
\end{array}
 \right)  Q(r),
\end{eqnarray*}
where $-\lambda_{max}<-\lambda_{min}$ 
denote the two real valued eigenvalues of $A.$ 
\end{lem}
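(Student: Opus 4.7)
The plan is pure linear algebra applied to the $2 \times 2$ matrix $A(r)$ whose entries $\alpha,\beta,\gamma,\delta$ were just computed. I will show that its eigenvalues are real and distinct, and then build $Q$ by taking suitably normalized left eigenvectors as rows.

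The characteristic polynomial is $\chi_A(X) = X^2 - (\alpha+\delta)X + (\alpha\delta - \beta\gamma)$, with discriminant $\Delta = (\alpha-\delta)^2 + 4\beta\gamma$. From the explicit integral formulas for $\beta,\gamma$ together with $P \geq 0$ and $\Theta \geq 0$ (the latter being physical and consistent with the smallness bound on $\langle r\rangle^2 \Theta$), both $\beta$ and $\gamma$ are non-positive, so $\beta\gamma \geq 0$. Moreover, Remark \ref{notation} gives $\delta < \alpha$ strictly: this rests on the standing hypothesis $C_V \leq \kappa/(2\mu_0+\lambda_0)$ combined with $\frac{r'}{2} + U > 0$ on $(0,r)$, itself secured by $|U/r| \leq b \ll 1$ from Remark \ref{beta}. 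Therefore $\Delta > 0$, the eigenvalues $-\lambda_{\min} > -\lambda_{\max}$ are real and distinct, and $A$ is diagonalizable.

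It remains to produce left eigenvectors $v_1, v_2$ for $-\lambda_{\min}$ and $-\lambda_{\max}$ respectively, whose first and second components are non-zero, so that after rescaling we obtain $v_1 = (1,a_1)$ and $v_2 = (a_2,1)$. If instead $v_1 = (0, y_1)$ with $y_1 \neq 0$, then $v_1 A = (y_1\gamma, y_1 \delta) = -\lambda_{\min}(0,y_1)$ forces $\gamma = 0$ and $\delta = -\lambda_{\min}$; but $\gamma = 0$ makes $A$ upper triangular with spectrum $\{\alpha,\delta\}$, so $-\lambda_{\min} = \alpha$, contradicting $\alpha > \delta$. Symmetrically, $v_2 = (x_2, 0)$ with $x_2 \neq 0$ would force $\beta = 0$ and $\alpha = -\lambda_{\max}$, whence (since $\beta = 0$ gives spectrum $\{\alpha,\delta\}$ and $-\lambda_{\max} = \delta$) we obtain $\alpha = \delta$, again a contradiction. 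Taking $Q$ to be the $2\times 2$ matrix whose rows are these normalized eigenvectors then yields $QA = \operatorname{diag}(-\lambda_{\min}, -\lambda_{\max})\, Q$, equivalently $A = Q^{-1}\operatorname{diag}(-\lambda_{\min}, -\lambda_{\max})\, Q$, with $Q_{11} = Q_{22} = 1$ as required.

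No substantial obstacle is anticipated: the entire statement is a structural fact about a $2 \times 2$ matrix with non-positive off-diagonal entries and a strict comparison of diagonal entries, and both ingredients follow directly from the explicit integral expressions for $\alpha,\beta,\gamma,\delta$ together with the sign of $\Theta$ and the hypothesis on $C_V$.
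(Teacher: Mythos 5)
Your proof is correct and follows essentially the same route as the paper: compute the characteristic polynomial, observe the discriminant $(\alpha-\delta)^2+4\beta\gamma$ is positive (the paper uses $\beta,\gamma<0$; you additionally invoke $\alpha>\delta$ from Remark~\ref{notation}, which is slightly more robust), and then diagonalize via eigenvectors normalized to have unit diagonal. The one substantive difference is that you explicitly verify the relevant eigenvector components are nonzero so the normalization is legitimate, whereas the paper simply writes down the explicit formulas for $Q$ and $Q^{-1}$ without comment.
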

\begin{proof}
The characteristic polynomial of that matrix $A$ is $X^2 - (\delta + \alpha) X + \alpha \delta - \beta \gamma. $
\\
Its discriminant is 
\begin{align*}
\Delta &= (\delta + \alpha)^2 - 4 (\alpha \delta - \beta \gamma) = (\alpha - \delta)^2 + 4 \beta \gamma.
\end{align*}
Given that both $\beta$ and $\gamma$ are negative, $\Delta >0.$
\\
From this we can deduce the two eigenvalues:
\begin{align*}
-\lambda_{max}  :=  \frac{1}{2} \big( \alpha + \delta - \sqrt{\Delta} \big), \ \ \ \   -\lambda_{min}  :=  \frac{1}{2} \big(\alpha + \delta + \sqrt{\Delta}  \big) . 
\end{align*}
We find the corresponding eigenvectors, and deduce that the matrix $A(r)$ can then be diagonalized after introducing the matrix $Q(r)$ defined as 
\begin{eqnarray*} 
Q(r) &=& \left( \begin{array}{cc}
1     &       \frac{-\lambda_{max}-\delta}{\gamma}  \\
\frac{\gamma}{-\lambda_{min}-\delta}   &     1
\end{array} \right), \\
Q^{-1}(r) &=& \frac{1}{D} \left( \begin{array}{cc}
1     &      -   \frac{-\lambda_{max}-\delta}{\gamma}   \\
- \frac{\gamma}{-\lambda_{min}-\delta}  &     1
\end{array} \right),
\end{eqnarray*}
where $D:=1-\frac{-\lambda_{max} - \delta}{-\lambda_{min} - \delta}. $
\end{proof}
We write 
\[
Q(r) = 
\left( \begin{array}{cc}
1     &       q_{12}  \\
q_{21}   &     1
\end{array} \right), 
\qquad 
Q^{-1} (r) = 
\frac{1}{D} 
\left( \begin{array}{cc}
1     &      - q_{12}  \\
- q_{21}   &     1
\end{array} \right), 
\qquad 
D = 1 - q_{12} q_{21}.
\]
The following lemma justifies the fact that the matrix $Q$ is a perturbation of the identity for large values of $r$:
\begin{lem} \label{asymp}
We have the following estimates, valid for any $r>0:$
\begin{eqnarray*}
&\vert q_{12} \vert & \leqslant \frac{2(2\mu_0+\lambda_0) R \Lambda \sup_{r>0}P^{\alpha} \Theta}{\kappa \big(1/2-b\big) r}, \\
&\vert q_{21} \vert & \leqslant \frac{2 R \Lambda}{(1/2-b) r}, \\
&  \vert D \vert & \geqslant 1.
\end{eqnarray*}
\end{lem}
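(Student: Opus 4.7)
My plan is to first rewrite $q_{12}$ and $q_{21}$ in a more convenient form using the explicit eigenvalues of $A$, then lower bound the ``gap'' $\alpha - \delta$ with the help of Remark 3.7, Remark 3.5, and the cavitation hypothesis \eqref{cavitation}.

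Recall that $\alpha, \delta \leq 0$ and $\beta, \gamma < 0$, and that the eigenvalues are
\[
-\lambda_{\max} = \tfrac12\bigl(\alpha+\delta - \sqrt{\Delta}\bigr), \qquad -\lambda_{\min} = \tfrac12\bigl(\alpha+\delta + \sqrt{\Delta}\bigr), \qquad \Delta = (\alpha-\delta)^2 + 4\beta\gamma.
\]
The first step is to rationalize the numerator of
\[
q_{12} = \frac{-\lambda_{\max}-\delta}{\gamma} = \frac{\alpha-\delta - \sqrt{\Delta}}{2\gamma},
\]
by multiplying by $\alpha-\delta + \sqrt{\Delta}$, which gives
\[
q_{12} = \frac{-2\beta}{\alpha-\delta+\sqrt{\Delta}}, \qquad q_{21} = \frac{2\gamma}{\alpha-\delta+\sqrt{\Delta}}.
\]

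The second step is to lower bound $\alpha - \delta$. Writing
\[
\alpha - \delta = \int_0^r \Bigl(\frac{1}{2\mu+\lambda} - \frac{C_V}{\kappa}\Bigr) P\Bigl(\frac{r'}{2} + U\Bigr)dr',
\]
Remark 3.7 gives $\frac{C_V}{\kappa} \leq \frac{1}{2(2\mu+\lambda)}$, so the integrand is at least $\frac{1}{2(2\mu_0+\lambda_0)}P^{1-\alpha}\bigl(\frac{r'}{2}+U\bigr)$. Remark 3.5 then yields $\frac{r'}{2}+U \geq (\tfrac12 - b)r'$, so
\[
\alpha - \delta \;\geq\; \frac{\tfrac12 - b}{2(2\mu_0+\lambda_0)} \int_0^r r' P(r')^{1-\alpha}\,dr' \;\geq\; 0.
\]
In particular $\sqrt{\Delta} \geq |\alpha-\delta| = \alpha - \delta$, so the denominator in the rationalized expressions above is at least $2(\alpha-\delta)$, yielding
\[
|q_{12}| \leq \frac{-\beta}{\alpha-\delta}, \qquad |q_{21}| \leq \frac{-\gamma}{\alpha-\delta}.
\]

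The third step is to plug in the explicit formulas for $\beta$ and $\gamma$ and to use \eqref{cavitation}. Factoring $P\Theta = P^\alpha\Theta \cdot P^{1-\alpha}$ and extracting the sup,
\[
-\beta = \frac{R}{\kappa}\int_0^r P\Theta\,dr' \leq \frac{R}{\kappa}\Bigl(\sup_{r>0} P^\alpha \Theta\Bigr)\int_0^r P^{1-\alpha}\,dr', \qquad -\gamma = \frac{R}{2\mu_0+\lambda_0}\int_0^r P^{1-\alpha}\,dr',
\]
so combining with the lower bound on $\alpha - \delta$ gives
\[
|q_{12}| \leq \frac{2(2\mu_0+\lambda_0) R\, \sup_{r>0}P^\alpha \Theta}{\kappa(\tfrac12-b)}\cdot \frac{\int_0^r P^{1-\alpha}\,dr'}{\int_0^r r' P^{1-\alpha}\,dr'}, \qquad |q_{21}| \leq \frac{2R}{\tfrac12-b}\cdot \frac{\int_0^r P^{1-\alpha}\,dr'}{\int_0^r r' P^{1-\alpha}\,dr'}.
\]
The cavitation hypothesis \eqref{cavitation} bounds the quotient of integrals by $\Lambda/r$, which gives exactly the announced inequalities for $|q_{12}|$ and $|q_{21}|$.

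For the last bound, observe from the rationalized expressions that $q_{12} > 0$ and $q_{21} < 0$ (since $-\beta, -\gamma > 0$ and the denominators are positive), so $q_{12}q_{21} \leq 0$ and $D = 1 - q_{12}q_{21} \geq 1$. The only genuinely delicate point is verifying the sign of $\alpha-\delta$ to justify discarding $\sqrt{\Delta}$ in the denominator; everything else is integral manipulation tied together by Remark 3.5, Remark 3.7, and \eqref{cavitation}.
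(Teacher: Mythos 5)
Your proposal is correct and follows the same route as the paper: rationalize $-\lambda_{\max}-\delta$ to write $q_{12}=\frac{-2\beta}{\alpha-\delta+\sqrt{\Delta}}$, drop $\sqrt{\Delta}\geq\alpha-\delta\geq 0$ from the denominator, lower-bound $\alpha-\delta$ via Remark~\ref{C_V} and Remark~\ref{beta}, and close with the cavitation ratio bound; the sign argument for $D\geq 1$ is the same observation the paper makes after rationalizing ($q_{12}q_{21}=\frac{-4\beta\gamma}{(\alpha-\delta+\sqrt{\Delta})^2}<0$). You actually spell out the steps the paper compresses into ``using remark \ref{C_V} and the condition \eqref{cavitation}.''

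One remark worth flagging (it applies equally to the paper's own proof): the cavitation hypothesis \eqref{cavitation} only asserts $\frac{r\int_0^r P^{1-\alpha}}{\int_0^r r_1 P^{1-\alpha}}\leq\Lambda$ for $r\in(0,\varepsilon)$, whereas the lemma claims the bound for all $r>0$. For $r\geq\varepsilon$ one instead uses $P\geq P_\varepsilon$ to bound this ratio by a constant depending on $\varepsilon$, $P_\varepsilon$ and $\sup P^{1-\alpha}$; since the constants in Theorem~\ref{main} are allowed to depend on these quantities this is harmless, but you should not literally invoke $\Lambda$ outside $(0,\varepsilon)$.
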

\begin{proof}
We have
\begin{align*}
\vert q_{12} \vert  &= \bigg \vert \frac{-\lambda_{max} - \delta}{\gamma} \bigg \vert = \frac{2 \beta}{\alpha - \delta + \sqrt{\Delta}} \leqslant \frac{\beta}{\alpha-\delta}.                 
\end{align*}
Using remark \ref{C_V} and the condition \eqref{cavitation}, we obtain the desired bound. \\
Similarly, we obtain the bound on $q_{21}.$ \\
\\
For the last estimate on $D$, we notice that
\begin{align*}
1-\frac{\alpha - \delta - \sqrt{\Delta}}{\alpha - \delta + \sqrt{\Delta}} = 1  - \frac{-4\beta \gamma}{(\alpha - \delta + \sqrt{\Delta})^2} \geqslant 1,
\end{align*}
since both $\beta$ and $\gamma$ are negative.
\end{proof}
In what follows, we will systematically decompose $Q$ as the sum of the identity matrix, and the matrix of its off-diagonal terms that will be treated like an error term. The following basic computation will be used repeatedly in the sequel:
\begin{lem} \label{elem}
We have, for real numbers $a,b,c,d$:
 \[
\begin{split}
& 
\left\langle Q^{-1}\left( \begin{array}{cc}
e^{-\lambda_{min}} & 0 \\
0                  & e^{-\lambda_{max}}
\end{array}\right)
Q
\left( \begin{array}{cc}
a \\
b
\end{array}\right), 
\left( \begin{array}{cc}
c \\
d
\end{array}\right)
\right\rangle 
\\
= 
& 
 \Big\{ \Big( 1 + \frac{q_{12}q_{21}}{D} \Big) e^{-\lambda_{min}} - \frac{q_{12}q_{21}}{D} e^{-\lambda_{max}}\Big\} ac 
\\
& 
+ \Big\{ \Big( 1 + \frac{q_{12}q_{21}}{D} \Big) e^{-\lambda_{max}} - \frac{q_{12}q_{21}}{D} e^{\lambda_{min}} \Big\} bd 
\\
& + (e^{-\lambda_{min}} - e^{-\lambda_{max} } ) \frac{q_{12} }{D} bc
  + (-e^{-\lambda_{min}} + e^{-\lambda_{max} } ) \frac{q_{21}}{D} ad. 
\end{split}
\]
We also have 
\[
\frac{1}{2} \leq 1 + \frac{q_{12}q_{21}}{D} \leq 1 , \quad 0 < -q_{12}q_{21} \leq 1.
\]
\end{lem}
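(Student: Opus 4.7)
The plan is a direct computation based on the spectral decomposition of Lemma \ref{diag-A}. Since $A(r) = Q^{-1}\,\mathrm{diag}(-\lambda_{min},-\lambda_{max})\,Q$, functional calculus gives
\begin{equation*}
\exp(A(r)) \;=\; Q^{-1} \begin{pmatrix} e^{-\lambda_{min}} & 0 \\ 0 & e^{-\lambda_{max}} \end{pmatrix} Q .
\end{equation*}
First I would apply this matrix to $(a,b)^{T}$ and pair the result with $(c,d)^{T}$, using the explicit forms of $Q$ and $Q^{-1}$ recorded just above the statement. Reading off the four coefficients is immediate: the $bc$- and $ad$-terms come out as $\frac{q_{12}}{D}(e^{-\lambda_{min}}-e^{-\lambda_{max}})$ and $\frac{q_{21}}{D}(e^{-\lambda_{max}}-e^{-\lambda_{min}})$, matching the statement, while the $ac$- and $bd$-coefficients come out as $\frac{1}{D}(e^{-\lambda_{min}} - q_{12}q_{21}e^{-\lambda_{max}})$ and $\frac{1}{D}(e^{-\lambda_{max}} - q_{12}q_{21}e^{-\lambda_{min}})$ respectively.

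To bring these into the exact form of the lemma, the one algebraic trick is
\begin{equation*}
\frac{1}{D} \;=\; \frac{1 - q_{12}q_{21}}{D} + \frac{q_{12}q_{21}}{D} \;=\; 1 + \frac{q_{12}q_{21}}{D}.
\end{equation*}
Applied to the $ac$-coefficient this yields $\big(1 + \frac{q_{12}q_{21}}{D}\big) e^{-\lambda_{min}} - \frac{q_{12}q_{21}}{D} e^{-\lambda_{max}}$, and symmetrically for $bd$. The same identity immediately reduces the claimed scalar bounds to showing $1 \leq D \leq 2$.

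For this last point I would substitute $q_{12} = (-\lambda_{max}-\delta)/\gamma$ and $q_{21} = \gamma/(-\lambda_{min}-\delta)$, together with $-\lambda_{max,min} = \tfrac{1}{2}(\alpha + \delta \mp \sqrt{\Delta})$, to obtain
\begin{equation*}
q_{12}q_{21} \;=\; \frac{\alpha - \delta - \sqrt{\Delta}}{\alpha - \delta + \sqrt{\Delta}} .
\end{equation*}
Remark \ref{notation} gives $\alpha - \delta > 0$, and $\Delta = (\alpha-\delta)^{2} + 4\beta\gamma$ with $\beta,\gamma < 0$ yields $\sqrt{\Delta} > \alpha - \delta$. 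Hence the numerator is strictly negative, the denominator strictly positive and larger in absolute value, so $-1 < q_{12}q_{21} \leq 0$. This simultaneously gives $D = 1 - q_{12}q_{21} \in [1,2]$ and $0 < -q_{12}q_{21} \leq 1$, completing both bounds.

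There is no real obstacle here; the statement is bookkeeping for the diagonalization already supplied by Lemma \ref{diag-A}. The only point that is easy to miss is the collapse $1 + q_{12}q_{21}/D = 1/D$, which is simultaneously what produces the stated decomposition of the bilinear form and what makes the scalar bounds transparent. Its role in the sequel is presumably to decompose $\exp(A)$ into a positive diagonal part --- whose diagonal entries are convex combinations of $e^{-\lambda_{min}}$ and $e^{-\lambda_{max}}$ with weights $1/D$ and $(D-1)/D$ --- plus an off-diagonal remainder carrying the small factors $q_{12}/D$ and $q_{21}/D$ controlled by Lemma \ref{asymp}.
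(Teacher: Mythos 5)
Your proposal is correct, and since the paper states Lemma \ref{elem} without proof, there is nothing to diverge from: the only reasonable route is exactly the direct computation you carry out, namely $\exp(A) = Q^{-1}\,\mathrm{diag}(e^{-\lambda_{min}},e^{-\lambda_{max}})\,Q$ applied to $(a,b)^T$, paired with $(c,d)^T$, followed by the rewriting $1/D = 1 + q_{12}q_{21}/D$ and the identity $q_{12}q_{21} = \frac{\alpha-\delta-\sqrt{\Delta}}{\alpha-\delta+\sqrt{\Delta}}$. One point worth flagging: your computation gives the $bd$-coefficient as $\big(1+\tfrac{q_{12}q_{21}}{D}\big)e^{-\lambda_{max}} - \tfrac{q_{12}q_{21}}{D}e^{-\lambda_{min}}$, whereas the paper's display has $e^{\lambda_{min}}$ (positive exponent) in that slot; this is a typo in the paper, and your version is the correct one — it is also the form actually used later, e.g. in \eqref{R21}. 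A second, minor remark: Remark \ref{C_V} (not just Remark \ref{notation}) is what guarantees the strict inequality $\alpha - \delta > 0$ for $r>0$, since it gives $C_V \le \tfrac12\,\kappa/(2\mu+\lambda)$ pointwise; with only $C_V \le \kappa/(2\mu_0+\lambda_0)$ one a priori only gets $\alpha-\delta\ge 0$, which would still suffice for the stated non-strict bounds but not for the strict $-1 < q_{12}q_{21}$ you write.
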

We end this section with rough bounds that are relevant to treat the case where $r$ is close to 0:
\begin{lem}\label{asymp-small}
We have the following estimate for any $r>0:$
\begin{align*}
\vert q_{12} \vert \leqslant \sqrt{\frac{2\mu_0+\lambda_0}{\kappa} \sup_{r>0}P^{\alpha} \Theta}.
\end{align*}
Moreover, we have
\begin{align*}
\sup_{0<r<\varepsilon} e^{\sqrt{\Delta}}  \leqslant 2, \ \ \sup_{0<r<\varepsilon} e^{\lambda_{min}}  \leqslant 2.
\end{align*}
\end{lem}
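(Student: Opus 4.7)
The plan is to establish the three bounds in turn: the uniform estimate on $|q_{12}|$ by an AM--GM argument after an algebraic rearrangement, and the two exponential bounds on $(0,\varepsilon)$ by showing that $|\alpha|, |\delta|, \sqrt{\Delta}$ are each forced to be small by the smallness hypotheses of Theorem~\ref{main}, with the dependence on $\varepsilon$ absorbed into the constant $C$.

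For the first estimate, the key observation is that rationalizing the numerator of $q_{12}$ using $\Delta = (\alpha-\delta)^2 + 4\beta\gamma$ yields
\[
q_{12} \;=\; \frac{\alpha-\delta-\sqrt{\Delta}}{2\gamma} \;=\; \frac{-2\beta}{\alpha-\delta+\sqrt{\Delta}}.
\]
Since $\alpha-\delta>0$ (as already used in the proof of Lemma~\ref{diag-A}) and $\beta\gamma>0$, we get $\sqrt{\Delta}\geq 2\sqrt{\beta\gamma}$ and hence $|q_{12}|\leq \sqrt{|\beta|/|\gamma|}$. Plugging in the explicit formulas, using $2\mu+\lambda=(2\mu_0+\lambda_0)P^\alpha$, and bounding $P\Theta \leq P^{1-\alpha}\sup(P^\alpha\Theta)$ pointwise before integrating gives
\[
\frac{|\beta|}{|\gamma|} \;\leq\; \frac{2\mu_0+\lambda_0}{\kappa}\,\sup_{r>0}(P^\alpha\Theta),
\]
which is the desired bound after taking square roots.

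For the exponential bounds on $(0,\varepsilon)$, the plan is to estimate $|\alpha|,|\delta|,|\beta\gamma|$ individually, using Remark~\ref{beta} to control $|r'/2+U|\lesssim r'$ and the cavitation condition \eqref{cavitation} to trade factors of $P$ for factors of $P^{1-\alpha}$. I expect bounds of the form $|\alpha|+|\delta| \lesssim r^2\,\sup P^{1-\alpha}$ (after using that $\sup P^\alpha$ is also small under the assumptions) and $|\beta\gamma| \lesssim r^2(\sup P^{1-\alpha})^2 \sup(P^\alpha\Theta)$, leading via $\sqrt{\Delta}\leq|\alpha-\delta|+2\sqrt{|\beta\gamma|}$ to
\[
\sqrt{\Delta} \;\lesssim\; \varepsilon\,\sup P^{1-\alpha}\,\bigl(\varepsilon + \sqrt{\sup(P^\alpha\Theta)}\bigr)
\]
on $(0,\varepsilon)$. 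Combined with $|\lambda_{min}|\leq\tfrac12(|\alpha|+|\delta|+\sqrt{\Delta})$, this makes both $\sqrt{\Delta}$ and $|\lambda_{min}|$ as small as desired, in particular $\leq \log 2$, once the smallness constant $C$ of Theorem~\ref{main} is chosen small enough relative to $\varepsilon$, so that $e^{\sqrt{\Delta}}, e^{\lambda_{min}}\leq 2$.

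The only real obstacle is careful bookkeeping: the cavitation condition \eqref{cavitation} is precisely what allows $\int_0^r P$ to be replaced by $P^\alpha \int_0^r P^{1-\alpha}$, so that the final bounds involve only the quantities $\sup P^{1-\alpha}$ and $\sup(P^\alpha\Theta)$ that the theorem's smallness hypothesis controls, rather than $\sup P$ or $\sup\Theta$ individually. Identifying where each such substitution is needed is the subtle part; everything else reduces to elementary integration.
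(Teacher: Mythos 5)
Your proposal matches the paper's proof. The rationalization $q_{12}=\tfrac{-2\beta}{\alpha-\delta+\sqrt{\Delta}}$, the bound $|q_{12}|\leq\sqrt{\beta/\gamma}$ (via $\sqrt{\Delta}\geq 2\sqrt{\beta\gamma}$, which really only needs $(\alpha-\delta)^2\geq 0$), and the pointwise replacement $P\Theta\leq P^{1-\alpha}\sup(P^\alpha\Theta)$ are precisely what the paper does; the exponential bounds on $(0,\varepsilon)$ are likewise obtained in the paper by crude $\sup$-estimates on $|\delta|$, $(\alpha-\delta)^2$ and $\beta\gamma$ (the paper phrases $\beta\gamma$ via $\sup P\Theta$ rather than $\sup P^\alpha\Theta$, and does not invoke the cavitation ratio \eqref{cavitation} here, but both variants are controlled by the hypotheses), so the two arguments are essentially the same.
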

\begin{proof}
We write that
\begin{align*}
\vert q_{12} \vert = \bigg \vert \frac{-\lambda_{max}-\delta}{\gamma} \bigg \vert &= \frac{2\vert \beta \vert}{\alpha - \delta + \sqrt{(\alpha- \delta)^2 + 4 \beta \gamma}} \leqslant \sqrt{\frac{\beta}{\gamma}} \leqslant \sqrt{\frac{2\mu_0+\lambda_0}{\kappa} \sup_{r>0}P^{\alpha} \Theta}. 
\end{align*}
For the last two estimates, we use the following crude bounds:
\begin{align*}
\sqrt{\Delta} & \leqslant \sqrt{4\frac{R^2}{\kappa} \frac{\varepsilon^2}{2\mu_0 + \lambda_0} \sup_{r>0} P \Theta \sup_{r>0} P^{1-\alpha} + \frac{\varepsilon^4}{(2\mu_0+\lambda_0)^2} \big( \sup_{r>0} P^{1-\alpha} \big)^2}, \\
\lambda_{min} & \leqslant \frac{\varepsilon^2}{2\mu_0+\lambda_0} \sup_{r>0} P^{1-\alpha}. 
\end{align*}
The estimates then follow from the smallness assumptions.
\end{proof}

\subsection{Weighted energy estimate}
Now we take the inner product the equation \eqref{eq} with $(\Theta ;U)^T,$ and integrate by parts in the left-hand side.
\\
Then we split the expression between small and large values of $r.$ Overall we get
\begin{eqnarray*}
\textrm{LHS} &=& \int_0 ^{\varepsilon} \langle \exp \big( A(r) \big) \left( \begin{array}{ll}
\Theta' \\
U'
\end{array} \right), \left( \begin{array}{ll}
\Theta' \\
U'
\end{array} \right) \rangle dx  \\
&+& \int_{\varepsilon}^{\infty} \langle \exp \big( A(r) \big) \left( \begin{array}{ll}
\Theta' \\
U'
\end{array} \right), \left( \begin{array}{ll}
\Theta' \\
U'
\end{array} \right) \rangle dx \\
&:=& \textrm{LHS}_1 + \textrm{LHS}_2.
\end{eqnarray*}
We prove the following estimate on the left-hand side: 
\begin{lem} \label{LHS}
We have 
\begin{align*}
\textrm{LHS} \geqslant \int_{0}^{\varepsilon} e^{-\lambda_{max}} \big( \Theta'^2 + U'^2 \big)~dx + \frac{1}{4} \Bigg( \int_{\varepsilon}^{+\infty} e^{-\lambda_{min}} \Theta'^2 ~dx +\int_{\varepsilon}^{+\infty} e^{-\lambda_{max}} U'^2 ~dx \Bigg).
\end{align*}
\end{lem}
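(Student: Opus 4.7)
The strategy is to unfold the integrand of $\mathrm{LHS}$ explicitly using the diagonalization of $A$ from Lemma \ref{diag-A} and the expansion from Lemma \ref{elem}. Taking $a=c=\Theta'$, $b=d=U'$ in Lemma \ref{elem}, the integrand becomes
\[
c_1 (\Theta')^2 + c_2 (U')^2 + c_{12} \Theta' U',\qquad t:=-q_{12}q_{21}\in(0,1],
\]
with
\[
c_1=\tfrac{e^{-\lambda_{\min}}+t\,e^{-\lambda_{\max}}}{1+t},\quad c_2=\tfrac{e^{-\lambda_{\max}}+t\,e^{-\lambda_{\min}}}{1+t},\quad c_{12}=\tfrac{(q_{12}-q_{21})(e^{-\lambda_{\min}}-e^{-\lambda_{\max}})}{1+t}.
\]
Since $c_1,c_2$ are convex combinations of $e^{-\lambda_{\min}}$ and $e^{-\lambda_{\max}}$, both satisfy $c_j\geq e^{-\lambda_{\max}}$. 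The proof then splits at $r=\varepsilon$ and handles the two pieces with the two asymptotic lemmas.

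\textbf{Inner region $(0,\varepsilon)$.} I would invoke Lemma \ref{asymp-small}: the bounds $e^{\lambda_{\min}}\leq 2$ and the smallness of $\sqrt{\Delta}$ (via the smallness assumption on $\sup_r P^{1-\alpha}$ and $\sup_r r^2\Theta$) imply that both $e^{-\lambda_{\min}}$ and $e^{-\lambda_{\max}}$ are close to $1$, so their difference is small. This in turn suppresses $|c_{12}|$ (which carries the factor $e^{-\lambda_{\min}}-e^{-\lambda_{\max}}$), and the smallness of $|q_{12}|$ from Lemma \ref{asymp-small} together with $|q_{12}q_{21}|<1$ keeps $|q_{12}-q_{21}|$ under control. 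Absorbing the cross term by AM--GM into the positive excess $(c_1-e^{-\lambda_{\max}})(\Theta')^2+(c_2-e^{-\lambda_{\max}})(U')^2$ yields the desired pointwise bound $\geq e^{-\lambda_{\max}}((\Theta')^2+(U')^2)$.

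\textbf{Outer region $(\varepsilon,\infty)$.} I would invoke Lemma \ref{asymp}: $|q_{12}|,|q_{21}|\lesssim 1/r$, so $t\to 0$ at infinity and $c_1,c_2$ are close to $e^{-\lambda_{\min}},\,e^{-\lambda_{\max}}$. Splitting
\[
c_1(\Theta')^2+c_2(U')^2+c_{12}\Theta'U' = \tfrac{1}{4}\bigl(e^{-\lambda_{\min}}(\Theta')^2+e^{-\lambda_{\max}}(U')^2\bigr) + R,
\]
the residual $R$ gathers positive leftover diagonal mass ($3c_1/4$ and $3c_2/4$ up to corrections) which, via a weighted Young's inequality $|c_{12}\Theta' U'|\leq \eta(\Theta')^2+c_{12}^2(U')^2/(4\eta)$ with $\eta$ tuned by the ratio of the two target scales, absorbs the cross term. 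The factor $\tfrac14$ is precisely the margin this absorption consumes.

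\textbf{Main obstacle.} The delicate point is the inner-region bound: the symmetric part of $e^A$ has its smaller eigenvalue strictly below $e^{-\lambda_{\max}}$ whenever $q_{12}+q_{21}\neq 0$, so a naive completion-of-squares in $c_1(\Theta')^2+c_{12}\Theta' U'+c_2(U')^2$ gives a discriminant $(q_{12}+q_{21})^2(e^{-\lambda_{\min}}-e^{-\lambda_{\max}})^2/(1+t)^2\geq 0$. Closing the estimate requires carefully tracking that this deficit is a higher-order product of small quantities (the eigenvalue gap times a combination of $q_{12},q_{21}$), so that under the hypotheses of Theorem \ref{main} it is dominated by the positive gain $c_1+c_2-2e^{-\lambda_{\max}}$. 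This simultaneous bookkeeping of several small parameters is where the bulk of the technical work sits.
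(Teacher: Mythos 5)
Your setup via Lemma \ref{elem} matches the paper's, and the splitting at $r=\varepsilon$ is the same. But there is a genuine gap in the outer region, and it is precisely where the paper uses a key hypothesis you never invoke. For $r>\varepsilon$ your plan is to absorb the cross term $c_{12}\Theta'U'$ by a weighted Young inequality into the residual diagonal mass, choosing $\eta$ from ``the ratio of the two target scales.'' This cannot work: one would need $c_{12}^2\lesssim c_1c_2$, i.e.\ roughly $G^2q_{21}^2\lesssim e^{-\lambda_{\min}}e^{-\lambda_{\max}}$, i.e.\ $q_{21}^2\lesssim e^{-\sqrt{\Delta}}$. But $\sqrt{\Delta}\sim|\alpha-\delta|\sim r^2$ for large $r$, so $e^{-\sqrt{\Delta}}$ decays super-polynomially while $|q_{21}|$ decays only like $1/r$ (Lemma \ref{asymp}). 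The exponential gap between the two weights overwhelms the polynomial decay of the off-diagonal entries, and no choice of $\eta(r)$ salvages the pointwise absorption. The paper circumvents this not by Young's inequality but by the structural smallness hypothesis $\sup_{r>\varepsilon}|U'/(r\Theta')|$: one writes $|U'\Theta'|\leq \sup|U'/(r\Theta')|\cdot r\Theta'^2$ and pairs the extra $r$ with the $1/r$ decay of $q_{12},q_{21}$, so the cross term is converted entirely into a small multiple of $\int e^{-\lambda_{\min}}\Theta'^2$. This is the essential mechanism and it is absent from your argument.

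A smaller but real issue concerns the inner region. You claim the smallness of $|q_{12}|$ from Lemma \ref{asymp-small} ``together with $|q_{12}q_{21}|<1$ keeps $|q_{12}-q_{21}|$ under control.'' It does not: $|q_{12}|$ small and $|q_{12}q_{21}|\leq 1$ are compatible with $|q_{21}|\sim 1/|q_{12}|$ arbitrarily large (and indeed $|q_{21}|$ does blow up as $r\to 0$, since by cavitation $\gamma/\beta\to\infty$). Your diagnosis that there is a genuine deficit governed by $(q_{12}+q_{21})^2(e^{-\lambda_{\min}}-e^{-\lambda_{\max}})^2$ is correct and valuable, but the mechanism you offer to neutralize it is not; tracking what actually remains bounded requires relating $(q_{12}+q_{21})^2/(1+t)^2$ to $(\gamma-\beta)^2/\Delta$ and comparing that with $G\sim\sqrt{\Delta}$, not appealing to $|q_{12}|$ small.
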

\begin{proof}
\underline{Bound on $\textrm{LHS}_1:$}\\
In this case the bound is straightforward:
\begin{align*}
\textrm{LHS}_1 \geqslant \int_{0}^{\varepsilon} e^{-\lambda_{max}} \big( U'^2 + \Theta'^2 \big)~dx.
\end{align*}
\underline{Bound on $\textrm{LHS}_2:$}\\
As announced above, in this range, we use the diagonalization of $A$ and we write the matrices $Q$ and $Q^{-1}$ as the sum of the identity and an error term. This yields, using Lemma \ref{elem}:
\begin{align*}
\textrm{LHS}_2 &=  \int_{\varepsilon}^{\infty} \Big\{ \Big( 1 + \frac{q_{12}q_{21}}{D} \Big) e^{-\lambda_{min}} - \frac{q_{12}q_{21}}{D} e^{-\lambda_{max}}\Big\} \Theta'^2 dx \\
& +  \int_{\varepsilon}^\infty \Big\{ \Big( 1 + \frac{q_{12}q_{21}}{D} \Big) e^{-\lambda_{max}} - \frac{q_{12}q_{21}}{D} e^{\lambda_{min}} \Big\} U'^2~ dx \\
& + \int_{\varepsilon}^{\infty} (e^{-\lambda_{min}} - e^{-\lambda_{max} } ) \frac{q_{12} -q_{21} }{D} U' \Theta'~dx 
\end{align*}
We notice that 
\[
\frac{1}{2} \leq 1 + \frac{q_{12}q_{21}}{D} \leq 1 , \quad -q_{12}q_{21} > 0,
\]
which makes the first two terms the main contribution. The last term is treated as an error. 
\\
Using the bounds from Lemma \ref{asymp} and the smallness assumptions, we write that
\begin{align*}
\Bigg \vert \int_{\varepsilon}^{\infty} 
\frac{q_{12}}{D} |e^{-\lambda_{min}} - e^{-\lambda_{max} } |
U' \Theta'~dx \Bigg \vert & \leqslant \sup_{r \geqslant \varepsilon} \bigg \vert \frac{U'}{r \Theta'} \bigg \vert  \frac{2(2\mu_0+\lambda_0) R \Lambda \sup_{r>0}P^{\alpha} \Theta}{\kappa (1/2-b)} \int_{\varepsilon} ^{+\infty} e^{-\lambda_{min}} \Theta'^2 ~dx \\
& \leqslant \frac{1}{100}  \int_{\varepsilon} ^{+\infty} e^{-\lambda_{min}} \Theta'^2 ~dx, \\
\Bigg \vert \int_{\varepsilon}^{\infty} \frac{q_{21}}{D} 
|e^{-\lambda_{min}} - e^{-\lambda_{max} } | 
\Theta' U'~dx \Bigg \vert & \leqslant \sup_{r \geqslant \varepsilon}  \bigg \vert \frac{U'}{r \Theta'} \bigg \vert  \frac{2R\Lambda}{1/2 - b} \int_{\varepsilon} ^{+\infty} e^{-\lambda_{min}} \Theta'^2 ~dx \\
& \leqslant  \frac{1}{100}  \int_{\varepsilon} ^{+\infty} e^{-\lambda_{min}} \Theta'^2 ~dx.
\end{align*}
The other terms are easier to bound, therefore we omit the details.
\end{proof}
\noindent Now we move on to the right-hand side. We show that
\begin{lem}\label{RHS}
We have
\begin{align*}
\textrm{RHS} & \leqslant \frac{1}{20} \Bigg( \int_{\mathbb{R}^d} e^{-\lambda_{min}} \Theta'^2~dx + \int_{\mathbb{R}^d} e^{-\lambda_{max}} U'^2~dx - \int_{\mathbb{R}^d} e^{-\lambda_{max}} \frac{P}{2\mu+\lambda} U^2~dx  \Bigg) \\
& - \frac{C_V}{4\kappa} \int_{\mathbb{R}^d}e^{-\lambda_{min}} P \Theta^2~dx.
\end{align*}
\end{lem}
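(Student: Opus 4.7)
The plan is to expand the inner product
$\langle \exp(A(r)) F, (\Theta,U)^T\rangle$
where $F$ is the right-hand side vector of \eqref{eq}, then isolate the three genuinely good (negative) terms and dominate the remaining contributions by a small fraction of $\int e^{-\lambda_{\min}}\Theta'^2\,dx + \int e^{-\lambda_{\max}}U'^2\,dx$.

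First I would apply Lemma \ref{elem} with
$(a,b)=$ first components of $F$, etc. The diagonal part of the expansion produces the principal contributions: the term $-\frac{C_V P\Theta}{\kappa}$ paired with $\Theta$ and weight $(1+\tfrac{q_{12}q_{21}}{D})e^{-\lambda_{\min}}\ge \tfrac12 e^{-\lambda_{\min}}$ gives a term bounded above by $-\frac{C_V}{2\kappa}\int e^{-\lambda_{\min}}P\Theta^2\,dx$, which is strong enough to yield the claimed $-\frac{C_V}{4\kappa}$ term after absorbing a small error. Similarly, the term $-\frac{PU}{2(2\mu+\lambda)}$ paired with $U$ gives $-\frac{1}{2}\int e^{-\lambda_{\max}}\frac{P}{2\mu+\lambda}U^2\,dx$, which dominates the contribution $-\frac{1}{20}\int e^{-\lambda_{\max}}\frac{P}{2\mu+\lambda}U^2\,dx$ that needs to be removed from the stated RHS bound (note the sign: this is a good term, so we keep it on the correct side).

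Next, I would deal with the $-\frac{d-1}{r^2}U^2$ term by Hardy's inequality (which requires $d\ge 3$): since $e^{-\lambda_{\max}}$ is bounded and close to $1$ for small $r$ by Lemma \ref{asymp-small}, one gets
\[
\int e^{-\lambda_{\max}}\frac{U^2}{r^2}\,dx \lesssim \int e^{-\lambda_{\max}}U'^2\,dx,
\]
so this term can be absorbed into $\tfrac{1}{20}\int e^{-\lambda_{\max}}U'^2\,dx$. The quadratic viscous terms $(\mu/\kappa)(U')^2, ((d-1)U/r)^2, (\lambda/\kappa)(U'+\tfrac{d-1}{r}U)^2$ paired with $\Theta$ are controlled by the smallness of $\sup\langle r\rangle^2\Theta$ combined with Hardy to convert $U^2/r^2$ into $U'^2$; they come with weight $e^{-\lambda_{\min}}\le 1$ up to the factor $P^\alpha/\kappa$, and hence contribute at most $O(\sup\langle r\rangle^2\Theta)\int e^{-\lambda_{\max}}U'^2\,dx$, absorbable into the $\tfrac{1}{20}$ budget. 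The cross term $-\frac{RP\Theta(d-1)U}{\kappa r}$ paired with $\Theta$ is controlled by the smallness of $\sup|U/(r\Theta)|$, giving an $O(\sup|U/(r\Theta)|)\int e^{-\lambda_{\min}}P\Theta^2$ term absorbed into the $C_V/2\kappa$ budget.

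The second vector in \eqref{eq} (containing the factors $\alpha\,\frac{U'+(d-1)U/r}{\tfrac12 r+U}$) is handled using Remark \ref{beta}: $|U/r|\le b\ll 1$ implies $|\tfrac12 r+U|\ge \tfrac12 r-br\gtrsim r$, so each such prefactor is bounded by $|U'|/r+|U|/r^2$. These contributions are then estimated by Cauchy-Schwarz, $L^\infty$ control of $U/r$, smallness of $\Theta$ and $P^{1-\alpha}$, and Hardy, giving $o(1)$ multiples of $\int e^{-\lambda_{\max}}U'^2\,dx$ and $\int e^{-\lambda_{\min}}\Theta'^2\,dx$. Finally, the off-diagonal contributions from $Q^{-1}\,\mathrm{diag}(e^{-\lambda_{\min}},e^{-\lambda_{\max}})\,Q$ (those with coefficients $q_{12}/D$ and $q_{21}/D$) are bounded pointwise by Lemma \ref{asymp} for $r\ge\varepsilon$ (where $|q_{12}|,|q_{21}|\lesssim 1/r$ so one gains exactly the $1/r$ needed) and by Lemma \ref{asymp-small} for $r<\varepsilon$; after Cauchy-Schwarz they are absorbed into the remaining budgets.

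The main obstacle will be the bookkeeping for the cross terms of the form $\frac{q_{12}-q_{21}}{D}(e^{-\lambda_{\min}}-e^{-\lambda_{\max}})$ interacting with the quadratic nonlinearities: these must be simultaneously bounded by either the $e^{-\lambda_{\min}}\Theta'^2$ budget or the $e^{-\lambda_{\max}}U'^2$ budget depending on which component of $F$ they multiply, and one must use the pointwise bounds of Lemmas \ref{asymp} and \ref{asymp-small} in the right regime of $r$. Once all error terms are shown to be $\le \tfrac{1}{20}$ of the LHS budgets plus $\tfrac{C_V}{4\kappa}\int e^{-\lambda_{\min}}P\Theta^2$, the stated inequality follows.
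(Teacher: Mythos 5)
Your overall strategy matches the paper's: take the inner product, expand via the diagonalization from Lemma~\ref{elem}, extract the two negative diagonal contributions $-\frac{C_V}{\kappa}\int e^{-\lambda_{\min}}P\Theta^2$ and $-\int e^{-\lambda_{\max}}\frac{P}{2(2\mu+\lambda)}U^2$, and absorb everything else using the smallness of $\sup\langle r\rangle^2\Theta$, $\sup|U/(r\Theta)|$, $\sup_{r>\varepsilon}|U'/(r\Theta')|$ and $P^{1-\alpha}$ together with the pointwise bounds on $q_{12},q_{21}$. This is the same route as the paper, which also splits the right-hand side into linear ($\mathrm{RHS}_1$, $\mathrm{RHS}_2$) and nonlinear ($\mathrm{RHS}_3$) pieces and introduces a cut-off $\chi$ at scale $\varepsilon$.

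However there is a genuine gap in your treatment of the most delicate piece: the off-diagonal contribution $\frac{q_{12}}{D}(e^{-\lambda_{\min}}-e^{-\lambda_{\max}})\cdot\frac{d-1}{r^2}U\Theta$ coming from the second component of $\mathrm{RHS}_1$. You claim the off-diagonal terms are handled because "$|q_{12}|\lesssim 1/r$ so one gains exactly the $1/r$ needed" for $r\ge\varepsilon$, and refer to Lemma~\ref{asymp-small} for $r<\varepsilon$ with Cauchy--Schwarz. But for $r<\varepsilon$, Lemma~\ref{asymp-small} only gives $|q_{12}|=O(1)$, not $O(1/r)$; Cauchy--Schwarz alone then leaves a dangerous $\int \chi^2\,\Theta^2/r^2\,dx$ which is not controlled by any budget you have set aside. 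The paper handles this by applying Young's inequality to split $U\Theta/r^2 \lesssim U^2/r^2 + \Theta^2/r^2$ on $\{r<2\varepsilon\}$, absorbing the $U^2/r^2$ piece into the \emph{negative diagonal} term $-\{(1+\tfrac{q_{12}q_{21}}{D})e^{-\lambda_{\max}}-\tfrac{q_{12}q_{21}}{D}e^{-\lambda_{\min}}\}\frac{d-1}{r^2}U^2$, and applying Hardy's inequality to $\chi\Theta$ (not to $U$) for the $\Theta^2/r^2$ piece, which produces $\int\Theta'^2$ and $\int\chi'^2\Theta^2\lesssim\int_{r>\varepsilon}P\Theta^2$ contributions. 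In your write-up you instead discard the negative $-\frac{d-1}{r^2}U^2$ diagonal term by "absorbing it via Hardy into $\tfrac{1}{20}\int e^{-\lambda_{\max}}U'^2$," which is both unnecessary (it has a good sign) and wasteful, because you then lose the very term needed to absorb the $U^2/r^2$ half of the cross term near the origin. In short: Hardy must be applied to $\chi\Theta$, not $U$, and the negative $U^2/r^2$ diagonal term must be retained for absorption, not converted away.
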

\begin{proof}
We start by decomposing the right-hand side into three parts:
\begin{eqnarray*}
&& 
\bigg\langle RHS \text{ of } \eqref{eq}, 
\left( \begin{array}{ll}
\Theta \\
U
\end{array} \right) \bigg\rangle 
\\
&=& - \bigg \langle \exp \big(A(r)\big) \left( \begin{array}{ll}
0
\\
\frac{d-1}{r^2} U
\end{array} \right) , \left( \begin{array}{ll}
\Theta \\
U
\end{array} \right) \bigg \rangle \\
&-& \bigg \langle \exp \big(A(r)\big)\left( \begin{array}{ll}
\displaystyle \frac{C_V}{\kappa} P(r) \Theta (r) 
\\
\displaystyle \frac{1}{2(2\mu+\lambda)} P(r) U(r)
\end{array}
\right),  \left( \begin{array}{ll}
\Theta \\
U
\end{array} \right) \bigg \rangle \\
&+& \bigg \langle \exp \big(A(r)\big) \left( \begin{array}{ll}
N_1(P,\Theta,U)
\\
N_2(P,\Theta,U)
\end{array}
\right), \left( \begin{array}{ll}
\Theta \\
U
\end{array} \right) \bigg \rangle \\
&:=& \textrm{RHS}_1 + \textrm{RHS}_2 + \textrm{RHS}_3,
\end{eqnarray*}
where
\begin{align*}
N_1 (P,\Theta,U) &= - \frac{R}{\kappa} P \Theta \frac{d-1}{r} U + 2 \frac{\mu}{\kappa} \bigg( (U')^2 + \big( \frac{d-1}{r} U \big)^2 \bigg) + \frac{\lambda}{\kappa} \bigg(U' + \frac{d-1}{r} U \bigg)^2  \\
&- \frac{2 \mu + \lambda}{\kappa}\alpha \frac{U' +  \frac{d-1}{r} U}{\frac{1}{2}r + U} U'U - \frac{\lambda}{\kappa} \alpha\frac{U' +  \frac{d-1}{r} U}{\frac{1}{2}r + U}\frac{d-1}{r} U^2, \\
N_2 (P,\Theta,U) &=  - \alpha \frac{U' +  \frac{d-1}{r} U}{\frac{1}{2}r + U} U' - \frac{\lambda}{2\mu+\lambda} \alpha \frac{U' + \frac{d-1}{r} U}{\frac{1}{2}r + U}  \frac{d-1}{r} U.
\end{align*}
Next we estimate $\textrm{RHS}_1, \textrm{RHS}_2$ and $\textrm{RHS}_3$. The result from Lemma \ref{RHS} will then follow. \\
\\
\underline{Bound on $\textrm{RHS}_1$:}\\
As we did above, we use the elementary computation from Lemma \ref{elem} to obtain the expression: 
\begin{align}
\textrm{RHS}_1 & \label{R11} = - \int_{\mathbb{R}^d} \Big\{ \Big( 1 + \frac{q_{12}q_{21}}{D} \Big) e^{-\lambda_{max}} - \frac{q_{12}q_{21}}{D} e^{\lambda_{min}} \Big\} \frac{d-1}{r^2} U^2~dx   \\
& \label{R12} - \int_{\mathbb{R}^d} (e^{-\lambda_{min}} - e^{-\lambda_{max} } ) \frac{q_{12} }{D}  \frac{d-1}{r^2} U \Theta~dx    
\end{align}
We know that the first term is negative. For the second term, 
we will distinguish between $r$ large and $r$ small. We introduce a cut-off function $\chi$ such that:
\begin{itemize}
\item $\chi = 1$ on $[0,\varepsilon].$
\item $\chi$ is supported on $[0,2\varepsilon].$
\item $\vert \chi ' \vert \leqslant C \textbf{1}_{\lbrace \varepsilon \leqslant \vert x \vert \leqslant 2 \varepsilon \rbrace}$ for some numerical constant $C.$ Here $\textbf{1}_A$ denotes the characteristic function of the set $A$
\end{itemize} 
Using this cut-off, we write:
\begin{align}
\vert \eqref{R12} \vert \leq 
            & \label{R12s} 
  \int_{\mathbb{R}^d} \frac{| q_{12}|}{D} e^{-\lambda_{min}} \frac{d-1}{r^2} \chi^2 |U| \Theta ~dx\\
            & + \label{R12l} \int_{\mathbb{R}^d} \frac{| q_{12}|}{D} e^{-\lambda_{min}} \frac{d-1}{r^2} (1-\chi^2) |U| \Theta ~dx.
\end{align}
\textit{Bound on \eqref{R12s}:}\\
We write, using Lemma \ref{asymp-small}, Young's inequality, as well as the mean value theorem, we obtain the bound: 
\begin{align*}
\vert \eqref{R12s} \vert & \leqslant \frac{1}{2} \sqrt{\frac{2\mu_0+\lambda_0}{\kappa} \sup_{r>0} P^{\alpha} \Theta} \sup_{0<r<\varepsilon} ( e^{2 \sqrt{\Delta}} ) \int_0 ^{+\infty} e^{-\lambda_{max}} \chi^2 \bigg[ \frac{(d-1) U^2}{r^2} + \frac{(d-1) \Theta^2}{r^2} \bigg]~dx.
\end{align*}
The first part containing $U$ can be absorbed into \eqref{R11} with the smallness assumptions and Lemma \ref{asymp-small}. \\ 
For the part containing $\Theta,$ we use that $-\lambda_{max} \leqslant 0$ as well as Hardy's inequality, and the properties of the cut-off $\chi$ to write:
\begin{align*}
\int_{\mathbb{R}^d} e^{-\lambda_{max}} \chi^2 \frac{(d-1)\Theta^2}{r^2}~ dx & \leqslant C' \int_{\mathbb{R}^d} \big((\chi \Theta)' \big)^2 ~dx \\
 & \leqslant 2 C' \int_{\mathbb{R}^d} \Theta'^2~ dx + 2 C' \int_{\mathbb{R}^d} \chi'^2 \Theta^2~dx \\
 & \leqslant 2 C' \sup_{0<r<\varepsilon} e^{\lambda_{max}} \int_{\mathbb{R}^d} e^{-\lambda_{max}} \Theta'^2~ dx \\
 &+ 2C' C^2 \sup_{0<r<\varepsilon} e^{\lambda_{min}} \frac{1}{P_{\varepsilon}} \int_{\mathbb{R}^d} e^{-\lambda_{min}} P \Theta^2 ~ dx ,
\end{align*}
where $C'$ denotes the constant in Hardy's inequality. \\
Given the smallness assumptions, these contributions are acceptable.
\\
\\
\textit{Bound on \eqref{R12l}:}\\
Using Lemma \ref{asymp-small}, we obtain:
\begin{align*}
\vert \eqref{R12l} \vert & \leqslant \sup_{r > \varepsilon} \bigg \vert \frac{U}{r\Theta} \bigg \vert \frac{2 R\Lambda(2\mu_0+\lambda_0) \sup_{r>0} P^{\alpha} \Theta}{\kappa (1/2-b)} \int_{\mathbb{R}^d} (1-\chi^2) e^{-\lambda_{min}} \frac{d-1}{r^2} \Theta^2 ~dx \\
                         & \leqslant \frac{2R\Lambda(2\mu_0+\lambda_0) \sup_{r>0} P^{\alpha} \Theta}{\varepsilon ^2 \kappa P_{\varepsilon} (1/2-b)} \sup_{r > \varepsilon} \bigg \vert \frac{U}{r\Theta} \bigg \vert \int_{\mathbb{R}^d} e^{-\lambda_{min}} P \Theta^2~dx ,
\end{align*}
which will be controlled by a part of ${\rm RHS}_2$. We can conclude with the smallness assumptions. 
\\
\\
\noindent \underline{Bound on $\textrm{RHS}_2$:} 
\\
We use a similar reasoning for this part. With Lemma \ref{elem}, we obtain:
\begin{align}
\textrm{RHS}_2 &=\label{R21}-\frac{C_V}{\kappa} \int_{\mathbb{R}^d} \Big\{ \Big( 1 + \frac{q_{12}q_{21}}{D} \Big) e^{-\lambda_{min}} - \frac{q_{12}q_{21}}{D} e^{-\lambda_{max}}\Big\} P \Theta^2 ~dx \\
& 
- \int_{\mathbb{R}^d} \frac{P}{2(2\mu+\lambda)} \Big\{ \Big( 1 + \frac{q_{12}q_{21}}{D} \Big) e^{-\lambda_{max}} - \frac{q_{12}q_{21}}{D} e^{\lambda_{min}} \Big\} U^2 ~dx \\
\label{R22}&+\int_{\mathbb{R}^d} 
\Big\{ (e^{-\lambda_{min}} - e^{-\lambda_{max} } ) \frac{1 }{D} 
     \Big( \frac{ q_{12}}{2(2\mu+\lambda)} - \frac{q_{21}C_V}{\kappa} \Big) 
\Big\} PU\Theta ~dx . 
\end{align}
Now we estimate \eqref{R22}.  
We only estimate the terms with the slowest exponential, the other two terms can clearly be estimated in the same way. \\
As we did previously, we distinguish between $r$ small and $r$ large. \\
We start with the slowest exponential term in \eqref{R22}:
\begin{align}
\int_{\mathbb{R}^d}  e^{-\lambda_{min}} \frac{1}{D} \frac{q_{12}}{2\mu+\lambda} P U \Theta~dx & = \label{R23s} \int_0 ^{\varepsilon} e^{-\lambda_{min}} \frac{q_{12}}{D} \frac{P}{2\mu+\lambda} U \Theta ~dx \\
& \label{R23l} + \int_{\varepsilon}^{+\infty} e^{-\lambda_{min}} \frac{q_{12}}{D} \frac{P}{2\mu+\lambda} U \Theta ~dx,
\end{align}
and we estimate both pieces separately.\\
 In the case where $r$ is small, we use Lemma \ref{asymp-small} (more precisely the bound $\vert q_{12} \vert \leqslant \sqrt{\frac{\beta}{\gamma}}$ in the proof), and get:
\begin{align*}
\vert \eqref{R23s} \vert & \leqslant \int_0 ^{\varepsilon} e^{-\lambda_{min}} \sqrt{\frac{\int_0 ^r P \Theta}{\kappa P^{\alpha} \int_0 ^r P^{1-\alpha}}} \frac{\sqrt{P}}{\sqrt{2\mu+\lambda}} \vert U \vert \sqrt{P} \Theta~dx  \\
                         & \leqslant \frac{1}{2} \int_0 ^{\varepsilon} e^{-\lambda_{min}} \sqrt{\frac{\int_0 ^r P \Theta}{\kappa P^{\alpha} \int_0 ^r P^{1-\alpha}}} \big( \frac{P}{2\mu+\lambda} U^2 + P \Theta^2 \big)~dx \\
                         & \leqslant \frac{1}{2} \int_0 ^{\varepsilon} e^{-\lambda_{min}} \sqrt{ \frac{\Lambda \sup_{r>0} \Theta}{\kappa}}  P \Theta^2~dx \\ 
                         &+ \frac{1}{2} \sup_{0<r<\varepsilon} e^{2\sqrt{\Delta}} \int_0 ^{\varepsilon} e^{-\lambda_{max}} \sqrt{ \frac{\Lambda \sup_{r>0} \Theta}{\kappa}}  \frac{P}{2\mu+\lambda} U^2 ~dx,
\end{align*}
and we can conclude with the smallness conditions.
\\
Now we move on to the second part, using Lemma \ref{asymp}:
\begin{align*}
\vert \eqref{R23l} \vert & \leqslant \frac{2R \Lambda \sup_{r>0} \Theta}{\kappa (1/2-b)} \sup_{r>\varepsilon} \bigg \vert \frac{U}{r \Theta} \bigg \vert \int_{\mathbb{R}^d} e^{-\lambda_{min}} P \Theta^2 ~dx,
\end{align*}
and we can conclude using the smallness assumptions. \\
For the remaining term, we write
\begin{align*}
\frac{C_V}{\kappa}\int_{\mathbb{R}^d} \frac{q_{21}}{D} e^{-\lambda_{min}} P \Theta U ~dx \leqslant \frac{2 R \Lambda C_V}{\kappa(1/2-b)} \sup_{r>0} \bigg \vert \frac{U}{r \Theta} \bigg \vert \int_{\mathbb{R}^d} e^{-\lambda_{min}} P \Theta^2 ~dx.
\end{align*}
\\
\underline{Bound on $\textrm{RHS}_3$:} \\
All these terms are treated as error terms, and the proofs are easier or similar to the above. Therefore we only show how to estimate the main terms. \\
\\
\textit{Main contribution from $N_1:$}\\
For the first term, we simply use Remark \ref{beta} and write
\begin{align*}
\int_{\mathbb{R}^d} \frac{R}{\kappa} e^{-\lambda_{min}} P \Theta^2 \frac{d-1}{r} U dr \leqslant \frac{R(d-1) b}{\kappa} \int_{\mathbb{R}^d} e^{-\lambda_{min}} P \Theta^2 dr.
\end{align*}
Next, we write
\begin{align*}
\int_{\mathbb{R}^d} \frac{\mu}{\kappa} e^{-\lambda_{min}} U'^2 \Theta~dx &\leqslant \frac{\mu_0}{\kappa} \sup_{r>0} P^{\alpha} \Theta \sup_{0<r<\varepsilon} e^{2\sqrt{\Delta}} \int_0 ^{\varepsilon} e^{-\lambda_{max}} U'^2 ~dx \\
&+ \frac{\mu_0}{\kappa} \sup_{r>\varepsilon} \bigg \vert \frac{U'}{r \Theta'} \bigg \vert \sup_{r>0} P^{\alpha} r^2 \Theta  \int_{\varepsilon}^{+\infty} e^{-\lambda_{min}} \Theta'^2 ~dx .
\end{align*}
This is acceptable given our smallness assumptions. \\
Similarly
\begin{align*}
\int_{\mathbb{R}^d} \frac{\mu}{\kappa} e^{-\lambda_{min}} \frac{(d-1)U^2}{r^2} \Theta ~dx & \leqslant \frac{\mu_0}{\kappa} \sup_{r>0} P^{\alpha} \Theta \sup_{0<r<\varepsilon} e^{2\sqrt{\Delta}}  \int_0 ^{\varepsilon} e^{-\lambda_{max}} \frac{(d-1)^2 U^2}{r^2} ~dx \\
& + \frac{\mu_0}{\kappa} \sup_{r>\varepsilon} P^{\alpha -1} \Theta \frac{U^2}{r^2 \Theta^2} \int_{\varepsilon} ^{+\infty} e^{-\lambda_{min}} P \Theta^2~dx.
\end{align*}
The other main terms are bounded in the same way.
\\
\\
\noindent 
\textit{Main contribution from $N_2:$} \\
We write that 
\begin{align*}
\int_{\mathbb{R}^d} e^{-\lambda_{max}} \alpha \frac{U}{r/2+U} U'^2 ~dx \leqslant \frac{\alpha b}{\frac{1}{2}-b} \int_{\mathbb{R}^d} e^{-\lambda_{max}} U'^2~dx.
\end{align*}
The other main terms are bounded in the same way.
\end{proof}
\begin{proof}[Conclusion of the proof of Theorem \ref{main}]
The result follows directly by putting together Lemmas \ref{LHS} and \ref{RHS}.
\end{proof}

\end{document}